\newtheorem{theorem}{Theorem}[section]
\newtheorem{corollary}[theorem]{Corollary}
\newtheorem{definition}{Definition}[section]
\newtheorem{lemma}[theorem]{Lemma}
\newtheorem{proposition}[theorem]{Proposition}
\newenvironment{proof}[1][Proof]{\noindent\textbf{#1.} }{\ \rule{0.5em}{0.5em}}
\newcommand{\ind}{1\hspace*{-0.25em}\mathrm{I}\hspace{0.2em}}
\newcommand{\R}{\mathbb{R}}
\renewcommand{\d}{\,\mathrm{d}}
\newcommand{\ds}{\displaystyle}
\newcommand{\supp}{\mathop{\rm supp}\,}
\renewcommand{\P}{\mathbb{P}}
\newcommand{\E}{\mathbb{E}}
\newcommand{\e}{\mathrm{e}}
\newcommand{\dy}{\,\mathrm{d}y}
\renewcommand{\d}{\,\mathrm{d}}
\newcommand{\dist}{\mathrm{dist}}
\newcommand{\eps}{\varepsilon}
\renewcommand{\leq}{\leqslant}
\renewcommand{\geq}{\geqslant}
\definecolor{mauve}{rgb}{0.4,0,0.4}
\newcommand{\mauve}[1]{{\color{mauve} #1}}
\let\margintemp=\marginpar
\renewcommand{\marginpar}[1]{\margintemp{\tiny\textbullet\;\mauve{#1}}}
\begin{document}

\

\begin{center}
{\Large \textbf{Large Deviations estimates for some non-local equations}\\ \vspace*{5mm}
\textbf{I. Fast decaying kernels and explicit bounds}}\\ \vspace*{0.5in}
{\large
\sc C. Br\"andle\footnote{\noindent  Departamento de Matem{\'a}ticas, U.~Carlos III de Madrid, 
28911 Legan{\'e}s, Spain\\e-mail: {\tt cristina.brandle@uc3m.es}} \& E. Chasseigne\footnote{Laboratoire de Math\'ematiques et Physique Théorique, U.~F. Rabelais, Parc de Grandmont, 37200 Tours, France\\
email: {\tt emmanuel.chasseigne@lmpt.univ-tours.fr}}}
\end{center}
\vspace*{0.5in}

{\small \begin{center}
\textsc{Abstract}
\end{center}

We study large deviations for some non-local parabolic type equations.
We show that, under some assumptions on the non-local term, problems
defined in a bounded domain converge with an exponential rate to the
solution of the problem defined in the whole space. We compute this
rate in different examples, with different kernels defining the non-local
term, and it turns out that the estimate of convergence depends strongly on
the decay at infinity of that kernel.

\

Keywords: Non-local diffusion, Large deviations, Hamilton-Jacobi equation, L\'evy operators.

\

Subject Classification: 47G20, 60F10, 35A35, 49L25
}


\section{Introduction}

Consider continuous and  bounded solutions
$u:\R^N\times[0,\infty)\to\R$ of the linear non-local equation
\begin{equation} \label{eq:0}
	\frac{\partial u}{\partial t}(x,t)=\int_{\R^N}J(x-y)u(y,t)\dy - u(x,t)\,\quad
	\text{in}\quad \R^N\times(0,\infty)\,,
\end{equation}
where $u(x,0)=u_0(x)$ is fixed, bounded and continuous. For simplicity, we will assume
throughout the paper that solutions are non-negative, and thus also $u_0\geq0$.
The kernel $J$ is assumed to be a symmetric, continuous probability density.

The main contribution
of this paper is to describe how solutions $u_R$ of~\eqref{eq:0}, but defined in the ball $B_R=\{x\in\R^N : |x|\leq R\}$,
converge to the solution $u$ of~\eqref{eq:0}.

Let us first explain what $u_R$ is exactly: we consider here the notion of
Dirichlet problem that consists in putting $u_R=0$ not only on the topological
boundary of $B_R$, but also in all the complement of $B_R\times[0,\infty)$. In this way,
$u_R$ solves the equation
\begin{equation}\label{eq:uR}
		\frac{\partial u_R}{\partial t}(x,t)=\int_{B_R}J(x-y)u_R(y,t)\dy - u_R(x,t) \quad \text{in}\
		B_R\times(0,\infty)
\end{equation}
with initial data $u_R(x,0)=u_0(x)$ in $B_R$.
\ We refer to \cite{ChasseigneChavezRossi07} and \cite{Chasseigne07}
for more information on these non-local Dirichlet problems, and also
\cite{BarlesChasseigneImbert07} for similar questions (with singular kernels).

As one can imagine, under suitable assumptions, $u_R$ will reasonably converge to $u$ as $R\to\infty$,
but we want to obtain some estimates of how fast convergence occurs. Actually, this problem may be seen as a numerical
question since of course, computing numerically solutions requires a bounded domain. In this case one has
to know how far from the real solution the computed one is.

In the case of the Heat Equation, the answer is given in \cite{BarlesDaherRomano94}: the distance between $u$ and $u_R$
in the ball of radius $\theta R$ (with $0<\theta<1$) is estimated by
\begin{equation*}
	\sup_{|x|\leq\theta R}(u-u_R)\leq \exp\Big(-R^2\frac{(1-\theta)^2}{4t} + o(1)\Big)\,,
\end{equation*}
which means that convergence occurs exponentially fast, with a rate of the order of
$R^2$ inside the exponential.

Our aim is to produce similar estimates
for non-local equations~\eqref{eq:0}.
We face here several difficulties, which imply non
trivial adaptations of ideas and techniques in~\cite{BarlesDaherRomano94}, that we list below:

\textit{i)} Various behaviours of $J$ imply various rates: the importance of the tail of $J$ enters into play, since the operator puts emphasis on the difference between $u$ and $u_R=0$ far from the point where we compute $u_R$,
as we shall explain below in the subsection devoted to the probabilistic aspects. Roughly speaking,
the more $J$ is big at infinity, the slower $u_R$ converges to $u$.

\textit{ii)} The structure of the Hamiltonian which describes the rate function $\mathcal{I}$ is completely
different: for the Heat Equation the associated Hamilton-Jacobi equation is
$ \partial_t \mathcal{I} + |\nabla \mathcal{I}|^2=0$,
hence $H(p)=p^2$. Here the problem for the rate function is related to the hamiltonian
\begin{equation}\label{eq:ham}
	H(p)=\int_{\R^N}\Big(\e^{p\cdot y}-1\Big)J(y)\dy\,,	
\end{equation}
which, although it is local, is the limit as $R\to\infty$ of hamiltonians involving a non-local term,
$$
H(x,v,p,M,\mathcal{L}_R[v])=-\int_{\R^N}\Big(\e^{-R\big\{v(x+y/R)-v(x)\big\}}-1\Big)J(y)\dy=-\mathcal{L}_R[v].
$$
This localization process is one of the main interesting features of this problem.

\textit{iii)} We are not facing here a diffusive effect, but more a transport effect: in the case of the Heat Equation, the scaling used in~\cite{BarlesDaherRomano94} in order to proof convergence is the parabolic $(Rx,t)$ one (equivalent to $(R^2x,Rt)$). Here, we have to use a hyperbolic change of variables $(Rx,Rt)$ in order to scale the problem in a suitable
way.

\

\textsc{Probabilistic context - }
The term ``large deviation'' comes from the french ``grands \'ecarts'' which was used first
to describe how far from the normal distribution, some exceptional events are. For instance,
it is well-known that if $(X_n)$ is a sequence of independent and identically distributed
random variables with $\mathbb{E}(X_i)=\mu$, then
\begin{equation*}
	\frac{X_1+X_2+\cdots+X_n}{n}\stackrel{\P}{\longrightarrow}\mu
\end{equation*}
as $n\to+\infty$, the convergence occurring in law (this is the law of large numbers). Now,
one may wonder how to estimate, for $\eps>0$ small, the quantity:
\begin{equation*}
	\mathbb{P}\Big(\Big|\frac{X_1+X_2+\cdots+X_n}{n}-
	\mu\Big|>\eps\Big).
\end{equation*}
A result of Cramer (1938, see~\cite{Hollander} for a proof), shows that if one defines the rate function
$$I(\eps):=\ds\lim_{n\to+\infty}-\frac{1}{n}\log
	\P\Big(\Big|\frac{X_1+X_2+\cdots+X_n}{n}-\mu\Big|>\eps\Big), $$
then
 $$\ds I(\eps)=\sup_{t\in\R}\big\{\eps t-\log M(t)\big\}\,,\ \text{where}\
  M(t)=\E[\e^{t X_1}]<\infty\,,$$
which implies,
$$\mathbb{P}\Big(\Big|\frac{X_1+X_2+\cdots+X_n}{n}-
	\mu\Big|>\eps\Big)\leq\e^{\ds -n\,I(\eps)}\,.$$
This exponential behaviour is typical of what is called ``large deviations''.

In this paper, $(u-u_R)(\cdot,T)$ mesures in some sense the total amount of process that can
escape from the ball $B_R$ between times $0$ and $T$ (see \cite{BarlesDaherRomano94} and \cite{ChasseigneChavezRossi07} for
more explanations about this aspect). Thus, our results may
be viewed as ``large deviations'' results in the sense that the probability of escaping
the ball $B_R$ up to a given time becomes small as $R\to\infty$. Exponentially small in fact,
with a rate which depends on the tail of $J$ since this tail measures the amount of
``big jumps''. Values of $J$ near the origin only concern ``small jumps'' that are not relevant
as far as escaping the ball is at stake. So this is why, as we explain in Section 5, adding
a singularity at the origin does not change the rate of convergence.

\

\textsc{Main results - } After a preliminary section in which some properties of non-local and Hamilton-Jacobi equations are reviewed, Section~\ref{sect:preliminaries}, we devote Section~\ref{sect:theoretical} to the theoretical behaviour of the problem.
Theorem \ref{thm:est-IR} is a general result that gives the following estimate
\begin{equation*}
	\lim_{R\to\infty}|u-u_R|(x,t)\leq e^{-R I_\infty(x/R,t/R)+o(1)R}
\end{equation*}
where the rate function $I_\infty$ satisfies the limit Hamilton-Jacobi problem associated
to the Hamiltonian $H$ defined in \eqref{eq:ham}:
\begin{equation}
    \label{eq:Iinfty}
		\begin{cases}
			\partial_t I_\infty + H(\nabla I_\infty)=0 & \text{in }B_1\times(0,\infty),\\
			I_\infty=0&\text{on }\partial B_1\times(0,\infty),\\
			I_\infty(x,0)=+\infty & \text{in }B_1.
		\end{cases}
\end{equation}
Using a Lax-Oleinik formula, see~\cite{LionsBook}, we obtain a semi-explicit expression for $I_\infty$, see~\eqref{est:IR}.

In Section~\ref{sect:examples} we study different cases where explicit computations can be derived. A typical and
interesting example, often considered by authors, concerns the case when $J$ is compactly
supported in, say, $B_\eta$. We prove that the behaviour of $I_\infty$ is of ``$s\ln s$''
type, which in turn implies that the rate function is of $R\ln R$ type;~i.e. for any $\theta\in(0,1)$,
\begin{equation*}
    \sup_{|x|\leqslant\theta R}|u-u_R|(t)\leqslant\exp\Big(-(1-\theta)\frac{R\ln ((1-\theta)R/t)}{\eta}
    + o(1)R\Big) \quad\text{as}\quad R\to\infty.
\end{equation*}

Other examples which imply different rates, like $R(\ln R)^{(\alpha-1)/\alpha}$ for
$J(y)\sim~\e^{-|y|^\alpha}$, are dealt with and the limit case $J(y)=\e^{-|y|}$ is also
considered even if this leads to a singular hamiltonian, defined only for $|p|<1$, a case not covered by the results of Section~\ref{sect:theoretical}.

Finally, in Section 5 we explain how to extend these results to more singular situations,
when the kernel is not integrable near the origin. Provided the decay at infinity remains reasonable
(i.e., the Hamiltonian is defined everywhere),
the presence of a singularity at the origin does not change the behaviour since as we have already mentioned,
only the tail of $J$ is important in estimating $(u-u_R)$.

Notice however that using kernels with singularities requires a suitable concept of solution;
we use here the notion of viscosity solutions derived in \cite{BarlesChasseigneImbert07} for L\'evy-type operators,
which allows us to handle this situation. The Hamiltonian also needs
to be modified a bit, as a corrector term appears in the equation:
\begin{equation*}
	H(p)=\int_{\R^N}\Big(\e^{p\cdot y}-1-\ind_{\{|y|<1\}}(p\cdot y)\Big)J(y)\dy\,.
\end{equation*}

\section{Preliminaries}
\label{sect:preliminaries}
\setcounter{equation}{0}

As a first step in order to prove the main theorem of this paper, Theorem~\ref{thm:est-IR}, we need to state some properties of the non-local equations we are considering here,~\eqref{eq:0} and~\eqref{eq:uR}. We also need to do a thorough study, see Subsection~\ref{subsect:hamiltonian.and.lagrangian}, of the hamiltonian $H$ and  of the related Hamilton-Jacobi equation~\eqref{eq:Iinfty}.
To this aim, let us first state the exact assumptions on kernel $J$:
\begin{equation}\label{cond:J}
	J\ \text{\ non-negative,\ \  radially symmetric, }\ \int_{\R^N}J(y)\dy=1.
\end{equation}
The case $\int J(y)\dy=c<\infty$ can be also be considered just by doing a change of variables in $t$.

These are natural conditions used to give sense to the non-local equation. But moreover,
we assume that $J$ has fast decay
at infinity:
\begin{equation}\label{eq:defH}
	\text{For any }p\in\R^N,\quad H(p)=\int_{\R^N}\Big(\e^{p\cdot y}-1\Big)J(y)\dy<+\infty\,.
\end{equation}
For instance, compactly supported kernels are authorized and non-compactly supported kernels also,
provided they decay sufficiently fast at infinity. The limit case is $J(y)=\e^{-|y|}$ for which
$H$ is finite only inside the ball $B_1$.
We also give a formal derivation of estimates
for this limit case, that we intend
to prove in a forthcoming paper.
Notice also that Section \ref{sect:levy} extends the results to kernels $J$ that are singular
at the origin provided they remain L\'evy measures, and \eqref{eq:defH} is satisfied (but integrating
on the domain $\{|y|>1\}$).

\subsection{Properties of the non-local equation}

Equations \eqref{eq:0} and \eqref{eq:uR} have been studied in \cite{ChasseigneChavezRossi07,Chasseigne07};
we refer to these papers for proofs of existence, uniqueness and comparison results, as well as other
qualitative properties like positivity up to the boundary.

However, we want  to deal here with only bounded (and continuous) initial data, a case not covered in
\cite{ChasseigneChavezRossi07} since the argument of Fourier transforms that was used there requires that both
$u_0$ and its Fourier transform are integrable. So let us first give a more general existence and
uniqueness result in $\R^N$.
\begin{definition}
    Let $u_0:\R^N\to\R$ be continuous, non-negative and bounded.
    A strong solution of \eqref{eq:0} with initial data $u(0,x)=u_0(x)$
    is a function $u\in\mathrm{C}^0\big(\R^N\times[0,\infty)\big)$
    such that $u_t\in\mathrm{C}^0\big(\R^N\times(0,\infty)\big)$ and~\eqref{eq:0}
    holds in the classical sense, pointwise.
\end{definition}

Before proving existence, we state a comparison result valid
in the class of bounded strong solutions:

\begin{proposition}
    \label{prop:comparison}
	Let $u,v$ be bounded strong solutions of \eqref{eq:0} with initial data $u_0$ and $v_0$
	respectively. If $u_0\leq v_0$, then $u\leq v$ everywhere.
\end{proposition}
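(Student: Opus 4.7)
The plan is to argue by contradiction via a penalized maximum principle adapted to the non-local setting. Setting $w := u - v$, linearity of~\eqref{eq:0} implies that $w$ is itself a bounded strong solution of the same equation with $w(\cdot,0) \leq 0$, so the claim reduces to proving $w \leq 0$ everywhere. The main obstruction is that $w$ is only bounded, so its supremum on $\R^N\times[0,T]$ need not be attained, and one cannot directly read off a maximum principle from the equation.

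To localize, fix $T > 0$ and introduce
\begin{equation*}
    \tilde w(x,t) := w(x,t) - \eps\,\chi(x) - \eta\, t, \qquad \chi(x) := (1+|x|^2)^{1/2},
\end{equation*}
with parameters $\eps, \eta > 0$. Since $\chi(x)\to\infty$ as $|x|\to\infty$ and $w$ is bounded and continuous, $\tilde w$ attains its supremum on $\R^N\times[0,T]$ at some point $(x_0,t_0)$. Assuming for contradiction that $w(x^*,t^*)>0$ at some point, pick $T>t^*$ and take $\eps,\eta$ small enough that $\tilde w(x^*,t^*)>0$; the attained supremum is then strictly positive, and since $\tilde w(\cdot,0)\leq0$ this forces $t_0>0$.

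At such a maximum point $\partial_t\tilde w(x_0,t_0)\geq 0$. Introducing the non-local operator $\mathcal{L}[\phi](x):=\int_{\R^N} J(x-y)\phi(y)\,dy - \phi(x)$, the fact that $J$ is a probability density gives the pointwise estimate $\mathcal{L}[\tilde w](x_0,t_0)\leq 0$, because the integral on the right is a weighted average of $\tilde w(\cdot,t_0)$ that cannot exceed its maximum value. Combining $\partial_t w=\mathcal{L}[w]$ with linearity of $\mathcal{L}$ and the identity $\mathcal{L}[1]=0$, one obtains
\begin{equation*}
    0\leq \partial_t\tilde w(x_0,t_0) = \mathcal{L}[\tilde w](x_0,t_0) + \eps\,\mathcal{L}[\chi](x_0) - \eta \leq \eps\,\mathcal{L}[\chi](x_0) - \eta.
\end{equation*}
A translation in the $y$ variable together with the elementary bound $\bigl|\sqrt{1+|x+z|^2}-\sqrt{1+|x|^2}\bigr|\leq|z|$ shows that $|\mathcal{L}[\chi](x)|\leq\int_{\R^N}J(z)|z|\,dz$, which is finite under~\eqref{eq:defH}. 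Choosing $\eta>\eps\,\|\mathcal{L}[\chi]\|_\infty$ yields a contradiction, and therefore $w\leq 0$.

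The main technical obstacle is the tension between the two tasks that the penalty must perform simultaneously: $\chi$ must blow up at infinity in order to force attainment of the supremum, while $\mathcal{L}[\chi]$ has to remain uniformly bounded so that the $\eps$-correction can be absorbed by the harmless $\eta t$ term. The weight $(1+|x|^2)^{1/2}$ realizes this balance precisely because assumption~\eqref{eq:defH} guarantees that $J$ has a finite first moment.
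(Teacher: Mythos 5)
Your proof is correct and, unlike the paper's, is self-contained. The paper disposes of this proposition by citing \cite[Thm.~3]{BarlesImbert07} for the elliptic case and asserting that the parabolic adaptation is routine, whereas you give a direct maximum-principle argument: write $w=u-v$, penalize by $\eps\chi(x)+\eta t$ with $\chi(x)=(1+|x|^2)^{1/2}$ so that the supremum is attained, use that $\mathcal{L}[\phi]\leq 0$ at a spatial maximum when $\int J=1$, and conclude by choosing $\eta>\eps\|\mathcal{L}[\chi]\|_\infty$. What your route buys is transparency and independence from the viscosity-solution machinery of \cite{BarlesImbert07}; it exploits the fact that for this zero-order non-local operator a classical penalization already closes the argument without any doubling of variables.

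Two small remarks worth making explicit. First, the finiteness of $\int J(z)|z|\,dz$ is not completely immediate from \eqref{eq:defH}: since $J$ is symmetric, $H(e_i)=\int(\cosh y_i-1)J(y)\,dy<\infty$, and $\cosh t-1\geq |t|$ for $|t|\geq 2$ while $J$ integrates, hence each $\int|y_i|J(y)\,dy<\infty$ and so $\int|z|J(z)\,dz<\infty$. You implicitly use this, and it should be spelled out. Second, your argument genuinely needs this first-moment condition (hence \eqref{eq:defH} or a weakening thereof), whereas the bare statement of the proposition only assumes $J$ a probability density; this is harmless here since \eqref{eq:defH} is a standing hypothesis of the paper, but it does mean your proof is slightly less general than a reader might expect from the wording of the statement. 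One could relax the growth of the penalty (e.g.\ a logarithmic $\chi$) to weaken the moment condition, but under \eqref{eq:defH} the linear weight is exactly the right choice and the argument is clean as written.
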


\begin{proof}
	It is contained in \cite[thm 3]{BarlesImbert07}, using as L\'evy measure $\mu(z)=J(z)\,\mathrm{d}z$.
	The only adaptation is that we are here in a parabolic situation whereas the cited Theorem works
	for the elliptic case. The adaptation is standard, the ``$u_t$'' term replacing the
	``$-\gamma u$'' term in the Hamiltonian.
\end{proof}

\begin{theorem}\label{thm:ex-un}
	Let $u_0:\R^N\to\R$ be continuous non-negative and bounded. Then there exists a unique strong solution
	of~\eqref{eq:0} with initial data $u_0$.
\end{theorem}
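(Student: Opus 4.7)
The plan is to use the Duhamel (variation-of-constants) formulation, reinterpreting \eqref{eq:0} as the integral equation
\begin{equation*}
    u(x,t)=\e^{-t}u_0(x)+\int_0^t \e^{-(t-s)}(J*u)(x,s)\,\mathrm{d}s,
\end{equation*}
obtained by treating the term $-u$ with the integrating factor $\e^{t}$. This turns the problem into a fixed-point problem in the Banach space $X_T=\mathrm{C}_b(\R^N\times[0,T])$ endowed with the sup-norm, for any fixed $T>0$.

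First, I would check that the map $\mathcal{T}:X_T\to X_T$ defined by the right-hand side above is well-defined: if $u\in X_T$, then $J*u$ is continuous in $x$ (by dominated convergence, using $J\in\L^1$ and $u$ bounded) and continuous in $t$ (since $u$ is), and $\|J*u(\cdot,s)\|_\infty\leq\|u(\cdot,s)\|_\infty$ because $J$ is a probability density. Thus $\mathcal{T}u\in X_T$, with $\|\mathcal{T}u\|_\infty\leq\|u_0\|_\infty+(1-\e^{-T})\|u\|_\infty$. Moreover,
\begin{equation*}
    \|\mathcal{T}u-\mathcal{T}v\|_\infty\leq (1-\e^{-T})\|u-v\|_\infty,
\end{equation*}
so $\mathcal{T}$ is a strict contraction on $X_T$ and has a unique fixed point $u\in X_T$. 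A standard concatenation/extension argument (or, equivalently, running the same contraction on each $[kT,(k+1)T]$ with initial data $u(\cdot,kT)$) produces a unique continuous bounded solution on all of $[0,\infty)$.

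Next, I would recover the classical regularity required by the definition: once $u$ is continuous and bounded, the integrand in the Duhamel formula is continuous in $(x,t,s)$, so differentiating the formula in $t$ gives
\begin{equation*}
    u_t(x,t)=-\e^{-t}u_0(x)+(J*u)(x,t)-\int_0^t \e^{-(t-s)}(J*u)(x,s)\,\mathrm{d}s=(J*u)(x,t)-u(x,t),
\end{equation*}
which is continuous on $\R^N\times(0,\infty)$ and shows that \eqref{eq:0} holds pointwise; hence $u$ is a strong solution in the sense of the definition. Non-negativity of $u$ follows for free either from the equivalent series representation $u=\e^{-t}\sum_{n\geq0}\tfrac{t^n}{n!}J^{*n}*u_0$, in which every term is non-negative, or from Proposition~\ref{prop:comparison} applied with $v\equiv 0$.

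Uniqueness among bounded strong solutions is not really an extra step: it is already contained in Proposition~\ref{prop:comparison} applied twice, once with $u_0\leq v_0$ and once with $v_0\leq u_0$. I do not expect a serious obstacle here; the only slightly delicate point is verifying that $J*u$ is continuous on $\R^N\times[0,T]$ when $u$ is merely bounded continuous (no decay assumption), which is handled by cutting the convolution into $\{|y|\leq M\}$ and $\{|y|>M\}$ and using $J\in\L^1(\R^N)$ to make the tail arbitrarily small uniformly in $(x,t)$.
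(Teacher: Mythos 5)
Your proposal is correct, but it takes a genuinely different route from the paper. The paper builds the solution by monotone approximation: it cuts off $u_0$ to get data $u_{0n}\in\L^1\cap\L^\infty$ for which existence is already known from \cite{ChasseigneChavezRossi07} (via Fourier transform), lets $n\to\infty$ using the comparison principle to get a monotone bounded limit, passes to the limit in the distributional formulation of $\partial_t(\e^t u_n)=J\ast(\e^t u_n)$, and then upgrades regularity via continuity of the convolution. Your argument instead reformulates the problem as a Duhamel integral equation and runs Banach's fixed-point theorem directly in $\mathrm{C}_b(\R^N\times[0,T])$; the contraction constant $1-\e^{-T}$ is automatically $<1$, so the argument is clean (and in fact, since this holds for every finite $T$, the concatenation step is not even strictly necessary — uniqueness glues the solutions on $[0,T]$ for all $T$). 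Your route is more self-contained, does not lean on the Fourier-transform machinery of the earlier paper, and gives existence and uniqueness in one stroke; the paper's route recycles existing results and gets the comparison principle for free along the way. Both correctly reduce the $\mathrm{C}^1$-in-time regularity to continuity of $J\ast u$, and both obtain uniqueness from Proposition~\ref{prop:comparison}. The one point worth spelling out a bit more in your write-up is the joint continuity of $(x,t)\mapsto(J\ast u)(x,t)$ on $\R^N\times[0,T]$ for $u$ merely bounded continuous with no decay; your tail-cutting remark is exactly the right fix, but it should be stated that on $\{|y|\leq M\}$ one also needs uniform continuity of $u$ on compact sets in $(x,t)$ to conclude — this is automatic, but it is the step where continuity in $x$ and in $t$ are actually combined.
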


\begin{proof}
	Consider a sequence $u_{0n}\in L^\infty(\R^N)\cap L^1(\R^N)$, so that the Fourier transform of $u_{0n}$
	is also integrable, and such that $u_{0n}\to u_0$ monotonically: one can choose a sequence
	of the form $u_{0n}=u_0\cdot \chi_n$, $\chi_n$ smooth, compactly supported and converging
	monotonically to $1$. We know from \cite{ChasseigneChavezRossi07} that there exists a unique solution
	$u_n$ of \eqref{eq:0} which is in fact continuous since its Fourier transform remains integrable.

	As $n$ increases, the sequence increases by comparison in the class of continuous and bounded
	solutions. Since $(u_n)$ is bounded, there exists a limit $u$ defined in all $\R^N\times(0,\infty)$.
	The same happens with the sequence $(v_n)$ defined as $v_n (x,t):= \e^t u_n(x,t)$, which satisfies
	the equation $\partial_t v_n = J\ast v_n$.

	Passage to the limit in the sense of distributions is done using dominated converge for the convolution
	term: $J\ast v_n$ converges to $J\ast v$ and the limit equation $\partial_t v=J\ast v$ is satisfied in
	the weak sense.
	
	Now, using well-known properties of the convolution,
    we have that $J\ast v$ is continuous (recall that
	$J$ is integrable while $v$ is bounded), so that $\partial_t v$ is a continuous function. The same holds for $u$
	and thus we recover a strong solution.

	Uniqueness follows from the above comparison principle, Proposition~\ref{prop:comparison}
\end{proof}

Similar arguments show that indeed $u_R$ converges to $u$:

\begin{proposition}Let $u_0:\R^N\to\R$ be continuous non-negative and bounded. For any $R>0$ let $u_R$ be the
solution of~\eqref{eq:uR} with initial datum $u_0$. Then the sequence $(u_R)$ converges monotonically as $R\to\infty$ to the solution
$u$ of \eqref{eq:0} with initial datum~$u_0$.
\end{proposition}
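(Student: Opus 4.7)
The plan is to establish monotonicity of the family $(u_R)_{R>0}$, then pass to the monotone limit and identify it with $u$ via the uniqueness part of Theorem~\ref{thm:ex-un}.

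First I would prove monotonicity in $R$: if $R_1<R_2$, extend $u_{R_1}$ by zero outside $B_{R_1}$ and view it as a candidate subsolution on $B_{R_2}\times(0,\infty)$. Inside $B_{R_1}$ one has
\begin{equation*}
\partial_t u_{R_1}=\int_{B_{R_1}}J(x-y)u_{R_1}(y,t)\dy-u_{R_1}(x,t)
\leq \int_{B_{R_2}}J(x-y)u_{R_1}(y,t)\dy-u_{R_1}(x,t),
\end{equation*}
while in $B_{R_2}\setminus B_{R_1}$ the extension is $0$ and the right-hand side is non-negative. Hence the zero-extension of $u_{R_1}$ is a subsolution of~\eqref{eq:uR} on $B_{R_2}$, and the Dirichlet comparison principle of~\cite{ChasseigneChavezRossi07} yields $u_{R_1}\leq u_{R_2}$ on $B_{R_1}\times(0,\infty)$. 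The same comparison argument, this time against the whole-space solution $u$ (which restricted to $B_R$ is a supersolution of~\eqref{eq:uR}, since $\int_{B_R}J(x-y)u(y,t)\dy\leq \int_{\R^N}J(x-y)u(y,t)\dy$), gives the uniform bound $u_R\leq u$ on $B_R$.

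The family $(u_R)$ is thus non-decreasing in $R$ and bounded above by $u$, so it has a pointwise limit $\tilde u$ on $\R^N\times[0,\infty)$ (taking $u_R$ extended by $0$ outside $B_R$), satisfying $0\leq \tilde u\leq u$ and $\tilde u(x,0)=u_0(x)$. To pass to the limit in the equation, fix $(x,t)$ and note that by monotone convergence,
\begin{equation*}
\int_{B_R}J(x-y)u_R(y,t)\dy\ \longrightarrow\ \int_{\R^N}J(x-y)\tilde u(y,t)\dy,
\end{equation*}
so integrating~\eqref{eq:uR} in time on $[0,t]$ and letting $R\to\infty$ (using dominated convergence on the left) shows that $\tilde u$ solves the integral form of~\eqref{eq:0}. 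Continuity of $\tilde u$ follows from the regularising effect of the convolution with $J$ on the bounded function $\tilde u$, exactly as in the proof of Theorem~\ref{thm:ex-un}; one then recovers $\partial_t \tilde u\in\C^0$ and hence a strong solution.

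Finally, uniqueness of bounded strong solutions with datum $u_0$ (Theorem~\ref{thm:ex-un} together with Proposition~\ref{prop:comparison}) forces $\tilde u = u$. The main delicate point is the regularity of the limit: one has to make sure that the monotone limit is continuous in order to invoke uniqueness, but this is immediate once the integral formulation is established since the right-hand side $J\ast \tilde u$ is automatically continuous when $J\in L^1$ and $\tilde u$ is bounded.
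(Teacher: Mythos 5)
Your proposal is correct and follows essentially the same route as the paper: establish that $(u_R)$ is monotone in $R$ and uniformly bounded, take the monotone (pointwise) limit, pass to the limit in the equation using the convolution structure of $J$, recover continuity (and hence strong solvability) from $J\in L^1$, and conclude by uniqueness (Proposition~\ref{prop:comparison}). The paper's own proof is terser — it simply asserts monotonicity and boundedness and defers to the argument of Theorem~\ref{thm:ex-un} — whereas you spell out the subsolution/comparison argument giving $u_{R_1}\le u_{R_2}$ and $u_R\le u$; that is a useful elaboration of the same idea, not a different approach.
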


\begin{proof}
 We use the same proof as that of Theorem \ref{thm:ex-un}. The sequence $(u_R)$ is increasing with respect to $R$ and it is bounded by $\Vert u_0\Vert$. Hence it converges to some $u$ which is a distributional solution
 of \eqref{eq:0}. Using again properties of the convolution, we get that $u$ is in fact the unique
 strong solution of the equation with $u(x,0)=u_0(x)$.
\end{proof}

\subsection{Hamiltonians and Lagrangians}
\label{subsect:hamiltonian.and.lagrangian}

As it is well-known in the theory of Hamilton-Jacobi equations, the Legendre-Fenchel transform $L$ of $H$ defined as
\begin{equation}\label{def:lagrangian}
	L(q)=\sup\limits_{p\in\R^N}\big\{p\cdot q-H(p)\big\}\,,\quad q\in\R^N\,.
\end{equation}
plays an important role in representing solutions.
Since the rate function $I_\infty$ is the solution of a Hamiltin-Jacobi equation, see~\eqref{eq:Iinfty}  we review first some basic properties of both $H$ and $L$.
\begin{lemma}
    \label{Lemma:properties}
	Let $J$ satisfy \eqref{cond:J} and \eqref{eq:defH}. Then both $H$ and $L$ are
	nonnegative, increasing, strictly convex,
 superlinear and radially symetric.
\end{lemma}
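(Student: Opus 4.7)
The plan is to establish each property directly for $H$ from its integral representation, and then transfer the corresponding property to $L$ through the classical Legendre-Fenchel duality machinery.

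For $H$ itself: radial symmetry is immediate by the change of variable $y\mapsto Ry$ in the defining integral, using $J(Ry)=J(y)$ for every orthogonal $R$. Convexity follows because $p\mapsto \e^{p\cdot y}$ is convex for each fixed $y$, and convexity is preserved under integration against the positive measure $J(y)\dy$; this becomes strict convexity because $\e^{p\cdot y}$ is strictly convex on $\R^N$ as soon as $y\neq 0$, while $\{y=0\}$ is $J\dy$-null. Nonnegativity (with $H(0)=0$) becomes obvious after symmetrizing,
\begin{equation*}
H(p) \;=\; \int_{\R^N}\Bigl(\tfrac{\e^{p\cdot y}+\e^{-p\cdot y}}{2} - 1\Bigr)J(y)\dy \;=\; \int_{\R^N}\bigl(\cosh(p\cdot y)-1\bigr)J(y)\dy \;\geq\; 0.
\end{equation*}
Monotonicity in $|p|$ then follows from radiality and convexity: for a unit vector $\hat p$, the function $r\mapsto H(r\hat p)$ is convex on $[0,\infty)$, vanishes at $0$ and is nonnegative, hence nondecreasing. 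Superlinearity is the one step that requires pointwise positivity of $J$: continuity of $J$ together with $\int J=1$ produces a ball $B(y_0,\rho)$ on which $J\geq c>0$, and by the radial symmetry of $J$ one can, for any given direction $\hat p$, rotate that ball so that $y\cdot\hat p\geq |y_0|/2$ throughout; restricting the integral defining $H(p)$ to this rotated ball yields a bound of the form
\begin{equation*}
H(p) \;\geq\; c'\bigl(\e^{|p||y_0|/2}-1\bigr),
\end{equation*}
which is exponential in $|p|$ and hence a fortiori superlinear.

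For $L$, convexity is automatic as a supremum of affine functions, and $L(q)\geq 0\cdot q - H(0) = 0$ gives both nonnegativity and $L(0)=0$. Radial symmetry transfers from $H$: for any orthogonal $R$,
\begin{equation*}
L(Rq) \;=\; \sup_{p}\bigl(p\cdot Rq - H(p)\bigr) \;=\; \sup_{p}\bigl(Rp\cdot Rq - H(Rp)\bigr) \;=\; \sup_{p}\bigl(p\cdot q - H(p)\bigr) \;=\; L(q),
\end{equation*}
where in the middle equality one substitutes $Rp$ for $p$ and uses that $H$ is radial. Monotonicity of $L$ in $|q|$ then follows from radiality, convexity and $L(0)=0$ in exactly the same way as for $H$. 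Strict convexity and superlinearity of $L$ are standard outputs of Legendre-Fenchel duality applied to $H$: since $H$ is finite on all of $\R^N$ (by~\eqref{eq:defH}), smooth by differentiation under the integral sign (justified by the tail control that $H$ is finite on every ball), and strictly convex with exponential coercivity, its conjugate $L$ inherits strict convexity and superlinearity.

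The main technical point is the exponential lower bound yielding superlinearity of $H$; it is the only step where one must exploit pointwise positivity of $J$, extracted here from the continuity of $J$ together with $\int J=1$. Everything else reduces either to elementary convexity arguments or to classical Legendre-Fenchel results, and the radial/convex/zero-at-zero bundle gives monotonicity for free on both sides.
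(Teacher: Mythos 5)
Your proof follows the same overall strategy as the paper's: establish the properties of $H$ directly from the integral representation, then transfer them to $L$ through Legendre--Fenchel duality. Some of your individual steps are cleaner or more careful than the paper's. Your symmetrization $H(p)=\int(\cosh(p\cdot y)-1)J(y)\dy$ gives nonnegativity in one line, whereas the paper infers it from convexity together with $\nabla H(0)=0$. More significantly, your exponential lower bound via a ball on which $J\geq c>0$ is a genuinely more rigorous argument for superlinearity: the paper's one-line claim that ``strict convexity'' makes $H$ grow faster than linearly is not correct as stated (compare $p\mapsto\sqrt{1+|p|^2}$, which is strictly convex, vanishes at $0$ with vanishing gradient, and is not superlinear), while your coercivity estimate actually delivers the conclusion. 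You also spell out the transfer of radial symmetry and monotonicity to $L$, where the paper simply cites Rockafellar.

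There is, however, one genuine slip in the strict-convexity step. You claim this follows because ``$\e^{p\cdot y}$ is strictly convex on $\R^N$ as soon as $y\neq0$,'' but that is false for $N\geq2$: the Hessian of $p\mapsto\e^{p\cdot y}$ is the rank-one matrix $\e^{p\cdot y}\,y\otimes y$, so this map is affine, not strictly convex, along any direction orthogonal to $y$. Strict convexity of $H$ does not hold term-by-term; it comes from integrating the rank-one pieces $y\otimes y$ over a set $\{J>0\}$ that spans $\R^N$. The correct statement is $D^2H(p)=\int\e^{p\cdot y}\,y\otimes y\,J(y)\dy$, positive definite because $J$ is continuous, radial and has integral one, hence $\{J>0\}$ contains an open set and the outer products span. (The paper's displayed formula with $|y|^2$ in place of $y\otimes y$ is essentially the one-dimensional version and glosses over the same point.) A smaller remark on the superlinearity step: as written, $H(p)\geq c'\bigl(\e^{|p||y_0|/2}-1\bigr)$ does not quite follow, since the integral over the complement of the rotated ball can contribute down to $-1$; the clean fix is to reuse your own $\cosh$ reformulation, bounding $H(p)\geq\int_B(\cosh(p\cdot y)-1)J\geq c|B|\bigl(\cosh(|p||y_0|/2)-1\bigr)$, which is nonnegative termwise and still exponential. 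You also implicitly need $\rho\leq|y_0|/2$ and $y_0\neq0$, both of which are available but should be said.
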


\begin{proof}
	In order to prove rotation-invariancy, let $\sigma$ be a rotation of the sphere $\mathbb{S}^{N-1}$, then
	\begin{eqnarray*}
		H(\sigma p)&=&\int\big(\e^{(\sigma p)\cdot y}-1\big)J(y)\dy\\
		&=&\int\big(\e^{p\cdot(\sigma^{-1}y)}-1\big)J(y)\dy\\
		&=&\int\big(\e^{p\cdot y'}-1\big)J(\sigma y')\dy'\\
		&=&\int\big(\e^{p\cdot y'}-1\big)J(y')\dy'\\
		&=&H(p)
	\end{eqnarray*}
    (remember that $J$ is symmetric).
	
    Strict convexity and superlinearity
	come from the estimates:
	\begin{equation*}
		D^2 H(p)=\int \e^{p\cdot y}|y|^2 J(y)\dy>0\,,\ D_p H(0)=\int yJ(y)=0\,.
	\end{equation*}
	Indeed, by rotation-invariancy, $D_pH(p)$ is always pointing
	in the direction of $p$

 and moreover
    \begin{equation}
    \label{eq:DpH}
        p\cdot D_p H(p)=|p||D_pH(p)|>0
    \end{equation} for any $p\neq 0$. Using
	the strict convexity of $H$, we get that $H$ grows faster than linearly
	along the lines ${\lambda p}$, $\lambda>0$.

Strict convexity, together with $H(0)=0$ imply that $H$ is nonnegative.
	
    Finally, it is well-known,~\cite{Rockafellar},    that if $H$ enjoys the above properties, so does $L$.
\end{proof}

In the case we are considering,~i.e. $J$ radially symmetric, since $H$ and $L$ are also radially symmetric,
throughout the paper we denote by $H^*$ and $L^*$ the functions defined on $\R_+$ such that
\begin{equation}\label{def:sym-HL}
	H^*(|p|)=H(p)\quad\text{and}\quad L^*(|q|)=L(q).
\end{equation}

The following technical trick will be used several times in the paper,
hence we state it as a Lemma:
\begin{lemma}
	Let $J$ satisfy \eqref{cond:J} and \eqref{eq:defH}. For $q$ fixed, let $p_0=p_0(q)$ be a maximum point defining $L(q)$, that is,
	$L(q)=p_0\cdot q -H(p_0)$. Then $p_0\cdot q=|p_0||q|$.
\end{lemma}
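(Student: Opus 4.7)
The plan is to exploit the radial symmetry of $H$ together with the trivial Cauchy--Schwarz bound $p_0\cdot q\leq |p_0||q|$, and to show that any misalignment between $p_0$ and $q$ could be removed by a rotation, strictly increasing the objective $p\cdot q-H(p)$ without changing $H$, contradicting maximality.

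More precisely, I would first dispose of the degenerate cases: if $q=0$ then both sides of the claimed identity vanish, and if $p_0=0$ then $p_0\cdot q=0=|p_0||q|$, so in either case there is nothing to prove. Assume then $p_0\neq 0$ and $q\neq 0$. Let $\sigma\in SO(N)$ be any rotation such that $\sigma p_0=|p_0|\,q/|q|$, so that $(\sigma p_0)\cdot q=|p_0||q|$. From Lemma~\ref{Lemma:properties} (more precisely, the rotation-invariance computation carried out in its proof), $H$ is invariant under $SO(N)$, so $H(\sigma p_0)=H(p_0)$. Therefore
\begin{equation*}
  (\sigma p_0)\cdot q-H(\sigma p_0)=|p_0||q|-H(p_0)\geq p_0\cdot q-H(p_0)=L(q),
\end{equation*}
where the inequality is just Cauchy--Schwarz. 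On the other hand, by the very definition \eqref{def:lagrangian} of $L$, the left-hand side is at most $L(q)$. Hence all inequalities are equalities, which forces $p_0\cdot q=|p_0||q|$.

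The argument requires essentially nothing beyond the radial symmetry of $H$; the existence of a maximizer $p_0$ is guaranteed by the superlinearity of $H$ already established in Lemma~\ref{Lemma:properties}, and strict convexity even tells us that $p_0$ is unique. I expect no real obstacle here: the only subtle point is resisting the temptation to differentiate and invoke $\nabla H(p_0)=q$ together with \eqref{eq:DpH} (which would give a parallel but less self-contained proof), since the rotation argument above avoids any regularity discussion and directly uses the properties stated in the preceding lemma.
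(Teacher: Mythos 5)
Your proof is correct, but it follows a genuinely different route from the one in the paper. The paper's argument is differential: by the first-order condition at the maximizer, $q=D_pH(p_0)$, and then \eqref{eq:DpH} (i.e.\ the fact that $D_pH(p)$ is a non-negative multiple of $p$, itself a consequence of rotation-invariance) gives $p_0\cdot q=p_0\cdot D_pH(p_0)=|p_0|\,|D_pH(p_0)|=|p_0|\,|q|$. Your symmetrization argument --- rotate $p_0$ onto the direction of $q$, observe that $H$ is unchanged while the linear term can only increase by Cauchy--Schwarz, and conclude that equality must hold throughout --- reaches the same conclusion using only the definition \eqref{def:lagrangian} and the radial symmetry of $H$. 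What your version buys is robustness: it needs no differentiability of $H$, no justification of differentiation under the integral sign, and no discussion of whether the maximizer is an interior critical point, so it would survive essentially unchanged in settings where $H$ is merely radially symmetric and the supremum is attained (for instance the critical case of Section~\ref{subsec:expo}, where $H$ is finite only on a ball, or the L\'evy setting of Section~\ref{sect:levy}). What the paper's version buys is brevity, since \eqref{eq:DpH} is already on record. One pedantic remark: for $N=1$ the required map is a reflection rather than an element of $SO(1)$, but $H$ is invariant under the full orthogonal group because $J$ is radial, so your argument goes through verbatim with $O(N)$ in place of $SO(N)$.
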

\begin{proof}
	Notice first that by definition of $L$, $p_0$ solves the equation $q=D_p H(p_0)$.
	The result follows using~\eqref{eq:DpH}.
\end{proof}

\subsection{Viscosity solutions for the rate function equation}

In Section \ref{sect:theoretical}, we will have to study the following Hamilton-Jacobi equation with
Cauchy-Dirichlet boundary values,
\begin{equation}\label{pb:limitA}
		\begin{cases}
			\partial_t I^A + H(\nabla I^A)=0 & \text{in }B_1\times(0,\infty),\\
			I^A=0&\text{on }\partial B_1\times(0,\infty),\\
			I^A(x,0)=A & \text{in }B_1.
		\end{cases}
\end{equation}

Let us first recall the definition of  viscosity solutions for this equation
(see for instance \cite{CrandallLions83}):
\begin{definition}
\label{def:viscosity}
	A locally bounded u.s.c function $u:\overline{B}_1\times[0,\infty)\to\mathbb{R}$
 is a viscosity subsolution of
	\eqref{pb:limitA} if for any $\mathrm{C}^2$-smooth function $\varphi$, and any point
	$(x_0,t_0)\in\overline{B}_1\times[0,\infty)$ where $u-\varphi$ reaches a maximum, there holds,
	\begin{eqnarray*}
		x_0\in B_1 &\Rightarrow& \partial_t\varphi+H(\nabla\varphi)\leq0\text{ at }(x_0,t_0),\\
		x_0\in\partial B_1 &\Rightarrow & \min\big\{\partial_t\varphi+H(\nabla\varphi)\,;\,u\big\}
		\leq0\text{ at }(x_0,t_0)\,,\\
		t_0=0 &\Rightarrow & u\leq A\text{ in }B_1\,.
	\end{eqnarray*}
\end{definition}

A locally bounded l.s.c. function is a viscosity supersolution if the same holds with
reversed inequalities and min replaced by max at the boundary. Finally a viscosity solution is a
locally bounded function $u$ such that its u.s.c. and l.s.c. enveloppes are respectively sub- and
super-solutions of \eqref{pb:limitA}.

Since $H$ is convex we have the following representation:
\begin{lemma}\label{lem:representation}
	If $J$ satisfies \eqref{cond:J} and \eqref{eq:defH},
	the unique viscosity solution of \eqref{pb:limitA} is given by,
	\begin{equation*}
		I^A(x,t)=\min\Big\{A,tL^*\Big(\frac{\dist(x,\partial B_1)}{t}\Big)\Big\}
		\quad\text{in}\quad B_1\times(0,\infty)\,.
	\end{equation*}
\end{lemma}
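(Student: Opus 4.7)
The plan is to derive the formula via the Lax--Oleinik representation for the convex first-order Hamilton--Jacobi problem \eqref{pb:limitA}, and then to invoke the comparison principle for uniqueness. By Lemma \ref{Lemma:properties}, $L$ is convex, radially symmetric, superlinear, and $L(0)=-H(0)=0$. For a convex, superlinear Hamiltonian with Cauchy--Dirichlet data ($0$ on the lateral boundary, $A$ at $t=0$), standard value-function arguments (see \cite{LionsBook}) represent the viscosity solution as
\[I^A(x,t)=\inf\Big\{\int_0^\sigma L(\dot\gamma(s))\,\mathrm{d}s+\Phi(\gamma,\sigma)\Big\},\]
the infimum being taken over absolutely continuous curves $\gamma\colon[0,\sigma]\to\overline{B}_1$ with $\gamma(0)=x$ and $\sigma\in(0,t]$; here $\Phi(\gamma,\sigma)=0$ if $\gamma$ exits $B_1$ at some time $\sigma\leq t$, and $\Phi(\gamma,\sigma)=A$ if $\gamma$ remains inside $B_1$ up to time $\sigma=t$.

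First I would compare the two competing strategies. The ``stay inside'' strategy is witnessed by the constant curve $\gamma\equiv x$, which uses $L(0)=0$ and yields cost $A$. For the ``exit'' strategy, radial symmetry together with convexity of $L$ (Jensen's inequality) imply that the optimal path is a straight segment from $x$ to the nearest point of $\partial B_1$, travelled at constant speed in some time $\tau\in(0,t]$. Its cost is
\[\tau L^*\Big(\frac{\dist(x,\partial B_1)}{\tau}\Big).\]

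Second I would minimise this cost in $\tau$. Setting $d=\dist(x,\partial B_1)$ and $s=d/\tau$, and denoting by $p_0=(L^*)'(s)$ the maximiser in the Legendre formula, Legendre duality yields $L^*(s)-s(L^*)'(s)=-H^*(p_0)\leq 0$ (using $H^*\geq 0$ from Lemma \ref{Lemma:properties}). Hence
\[\frac{\mathrm{d}}{\mathrm{d}\tau}\Big[\tau L^*(d/\tau)\Big]=L^*(s)-s(L^*)'(s)\leq 0,\]
so the map $\tau\mapsto\tau L^*(d/\tau)$ is non-increasing on $(0,\infty)$ and attains its minimum over $(0,t]$ at $\tau=t$. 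Combining the two strategies gives exactly $\min\{A,tL^*(d/t)\}$.

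Third I would check the initial and boundary behaviour: as $x\to\partial B_1$, $d\to 0$ and $L^*(0)=0$; as $t\to 0^+$ with $d>0$, superlinearity of $L^*$ forces $tL^*(d/t)\to+\infty$, so the minimum equals $A$. Verifying that the candidate is a viscosity solution in the sense of Definition \ref{def:viscosity} is then the standard Lax--Oleinik / dynamic-programming argument, while uniqueness follows from the comparison theorem for convex Hamilton--Jacobi equations in \cite{CrandallLions83,LionsBook}. The main technical obstacle will be the viscosity boundary condition on $\partial B_1$, where the Dirichlet datum may be ``lost'' in the generalised sense of Definition \ref{def:viscosity}, together with the careful treatment of admissibility for exiting trajectories; both issues are standard for superlinear convex Hamiltonians, so I would appeal to the existing theory rather than reprove them.
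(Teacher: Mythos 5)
Your proposal is correct and follows essentially the same route as the paper: both invoke the Lax--Oleinik representation from \cite{LionsBook} for the Cauchy--Dirichlet problem, reduce the exit strategy to a straight-line path to the nearest point of $\partial B_1$ using radial symmetry and convexity of $L$, and then show the travel time $\tau$ is best taken equal to $t$. The only cosmetic difference is the mechanism for that last monotonicity step: you use the Legendre duality identity $L^*(s)-s(L^*)'(s)=-H^*(p_0)\leq 0$, whereas the paper uses the equivalent elementary convexity inequality $(L^*)'(cr)\geq L^*(cr)/(cr)$ coming from $L^*$ convex with $L^*(0)=0$; both yield that $\tau\mapsto\tau L^*(d/\tau)$ is non-increasing, so the two arguments are interchangeable.
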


\begin{proof}
	Recall that the assumptions on $J$ imply that both $H$ and $L$ are finite everywhere,
	convex, radially symmetric and super-linear, see Lemma~\ref{Lemma:properties}. We then start from the Lax-Oleinik formula
	in the bounded domain $B_1\times(0,\infty)$, see~\cite{LionsBook},
	\begin{equation*}
		I^A(x,t)=\min_{|y|\leq 1}\Big\{t L\Big(\frac{y-x}{t}\Big)+A\Big\}
		\wedge \min_{(y,s)\in \partial B_1\times(0,t)}
		\Big\{(t-s)L\Big(\frac{x-y}{t-s}\Big)\Big\}\,,
	\end{equation*}
(we denote by $a\wedge b$
the infimum of $a$ and $b$).
	Since $L$ is symmetric, we can rewrite it using $L^*$.  The fact that $L$ is nonnegative
	and $L(0)=0$ implies:
\begin{equation}
    \label{eq:minimum}
		I^A(x,t)=A
		\wedge \min_{(y,s)\in \partial B_1\times(0,t)}
		\Big\{(t-s)L^*\Big(\frac{|x-y|}{t-s}\Big)\Big\}\,.
	\end{equation}
	Since $L^*$ is increasing, the min is attained at the point $y$ such that
	$|x-y|=\dist(x,\partial B_1)$. Notice now that since $L$ is convex and $L(0)=0$,
	then for any fixed $c>0$, and variable $r>0$,
	\begin{equation*}
		(L^*)'(cr)\geq\frac{L^*(cr)}{cr}\,.
	\end{equation*}
	This implies that the function $r\mapsto L^*(cr)/r$ is increasing so that the minimum in~\eqref{eq:minimum} is attained for $s=0$ (use $c=\dist(x,\partial B_1)$ and $r=(t-s)^{-1}$ which is
	minimum	for $s=0$). Combining all these estimates, we are led to Lemma~\ref{lem:representation}.
\end{proof}


\section{Theoretical Behaviour}\label{sect:theoretical}
\setcounter{equation}{0}

The main goal of this section is to derive a theoretical bound, in terms of
the Lagrangian $L$, for the error made when approximating the solution, $u$, of~\eqref{eq:0} by solutions, $u_R$,
of the Dirichlet problem~\eqref{eq:uR}.

\begin{theorem}\label{thm:est-IR}
	If $J$ satisfies \eqref{cond:J} and \eqref{eq:defH},
	then for any fixed $x\in\R^N$ and $t>0$ there holds
	\begin{equation*}
	\lim_{R\to\infty}|u-u_R|(x,t)\leq e^{-R I_\infty(x/R,t/R)+o(1)R},
	\end{equation*}
	where the rate function is given by
	\begin{equation}\label{est:IR}
		I_\infty(x,t)=tL^*\Big(\frac{\dist(x,\partial B_1)}{t}\Big)\,.
	\end{equation}
	Moreover, for any $\theta\in(0,1)$, $0<t_1<t_2<+\infty$
	the limit is uniform in the set
	\begin{equation*}
		\big\{|x/R|\leq \theta\,,\ t/R\in(t_1,t_2)\big\}\,.
	\end{equation*}
\end{theorem}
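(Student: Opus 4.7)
My strategy adapts the Barles--Daher--Romano approach to the non-local setting by combining the hyperbolic rescaling $(x,t) \mapsto (R\xi, R\tau)$ (as flagged in the introduction) with a Hopf--Cole type logarithmic transformation, then passing to a viscosity limit as $R \to \infty$. Set $w := u - u_R \geq 0$. A short direct computation using $u_R \equiv 0$ outside $B_R$ shows that in $B_R$ the function $w$ satisfies the same equation as $u$:
\[
\partial_t w(x,t) = \int_{\R^N} J(x-y) w(y,t) \dy - w(x,t), \qquad x \in B_R,
\]
with $w(\cdot, 0) = u_0 \ind_{B_R^c}$. Define
\[
I_R(\xi, \tau) := -\frac{1}{R} \log \frac{w(R\xi, R\tau)}{\|u_0\|_\infty}, \qquad (\xi, \tau) \in \overline{B}_1 \times [0, \infty).
\]
The estimate to prove amounts to $\liminf_{R \to \infty} I_R(\xi, \tau) \geq I_\infty(\xi, \tau)$ uniformly on compact subsets of $B_1 \times (0, \infty)$. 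The boundary data match \eqref{pb:limitA} with $A = +\infty$: since $u(\cdot, 0) = u_R(\cdot,0) = u_0$ one has $I_R(\xi, 0) = +\infty$ for $\xi \in B_1$, while $u_R \equiv 0$ on $\partial B_R$ gives $I_R \to 0$ on $\partial B_1$.

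Dividing the equation above by $w(x, t)$ and substituting $y = R\xi + z$ yields the pointwise identity
\[
-\partial_\tau I_R(\xi, \tau) = \int_{B_R - R\xi} J(z) \e^{-R[I_R(\xi + z/R, \tau) - I_R(\xi, \tau)]} \d z + T_R(\xi, \tau) - 1,
\]
where the tail $T_R$ encodes the fact that $w$ coincides with $u$ outside $B_R$ and is of order one there. Since $u$ is bounded and, by \eqref{eq:defH}, $\int_{|z| \geq r} J(z)\d z$ decays faster than any exponential in $r$, $T_R$ is exponentially small on sets where $I_R$ stays bounded. Formally, $R[I_R(\xi + z/R) - I_R(\xi)] \to \nabla I_R(\xi) \cdot z$, and by symmetry of $J$ the remaining integral localizes to $1 + H(\nabla I_R)$, producing in the limit the Hamilton--Jacobi equation $\partial_\tau I_\infty + H(\nabla I_\infty) = 0$. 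This is the \emph{localization of the non-local operator} emphasized in the introduction.

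I would make this rigorous via the Barles--Perthame half-relaxed limits method applied to $\underline I := \liminf_* I_R$. Given a $\mathrm{C}^2$ test function $\phi$ touching $\underline I$ strictly from below at an interior point $(\xi_0, \tau_0) \in B_1 \times (0, \infty)$, standard perturbation produces minima $(\xi_R, \tau_R) \to (\xi_0, \tau_0)$ of $I_R - \phi$, at which $I_R(\xi_R, \tau_R) \to \phi(\xi_0, \tau_0)$ is bounded; hence $T_R$ is negligible at these points. Using $\phi \leq I_R$ to bound the exponential integrand and dominated convergence to take $R \to \infty$, one obtains the supersolution inequality
\[
\partial_\tau \phi(\xi_0, \tau_0) + H(\nabla \phi(\xi_0, \tau_0)) \geq 0.
\]
Combined with the boundary/initial behavior above, this shows $\underline I$ is a viscosity supersolution of \eqref{pb:limitA} with $A = +\infty$; comparison --- equivalently, the explicit representation of Lemma \ref{lem:representation} --- then forces $\underline I \geq I_\infty$. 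Translating back to $(x, t) = (R\xi, R\tau)$ variables gives the claimed bound, and uniformity on $\{|x/R| \leq \theta,\ t/R \in (t_1, t_2)\}$ follows from the continuity of $I_\infty$ on this compact set.

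The main technical obstacle I foresee is the rigorous localization
\[
\int_{B_R - R\xi_R} J(z) \e^{-R[\phi(\xi_R + z/R, \tau_R) - \phi(\xi_R, \tau_R)]} \d z \ \longrightarrow\ \int_{\R^N} J(z) \e^{-\nabla \phi(\xi_0, \tau_0) \cdot z} \d z,
\]
which requires an integrable majorant for the $R$-dependent integrand while expanding the domain to all of $\R^N$. This is precisely where \eqref{eq:defH} is essential: it guarantees $J(z) \e^{|p||z|} \in L^1(\R^N)$ for every $p$, which combined with the local Lipschitz bound on $\phi$ gives the dominating function. A secondary difficulty is the viscosity interpretation of the Dirichlet boundary condition on $\partial B_1$ in the sense of Definition \ref{def:viscosity}, together with the singular initial datum $A = +\infty$.
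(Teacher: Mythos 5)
Your proposal follows essentially the same route as the paper: rescale hyperbolically $(x,t)\mapsto(R\xi,R\tau)$, apply a logarithmic transform, pass to the Hamilton--Jacobi limit via half-relaxed limits, and invoke the Lax--Oleinik representation of Lemma~\ref{lem:representation}. The one genuine methodological difference is that you work with $w = u - u_R$ directly, which forces you to carry along the boundary contribution as a tail term $T_R$ that involves $u$ restricted to $\R^N\setminus B_R$; the paper instead sandwiches $w_R(\xi,\tau)=v_R(R\xi,R\tau)$ by the barrier $\psi_R$ solving the rescaled Dirichlet problem with the \emph{constant} exterior value $K=\|u_0\|_\infty$ and zero initial datum, so that the transformed equation for $I_R = -\frac1R\ln\psi_R$ is clean over all of $\R^N$ and the exterior contribution is simply $I_R\equiv -\frac1R\ln K$ there. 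Your variant is workable under \eqref{eq:defH} because the tail $T_R$ is super-exponentially small once $I_R$ is bounded at the test point, but the barrier makes the boundary analysis transparent and is what gives the clean Dirichlet datum on $\partial B_1$. Two small remarks: (i) your claim ``$I_R\to 0$ on $\partial B_1$'' is not correct for $w$ itself --- on $\partial B_R$ one has $w=u$, which may be small, so you only control $I_R\geq 0$ there; this is, however, the right inequality for the supersolution direction and does not affect the argument. (ii) You defer the singular initial datum $A=+\infty$ as a ``secondary difficulty'', but the truncation $I_R^A = -\frac1R\ln(\psi_R + \e^{-RA})$ followed by $A\to\infty$ is exactly how the paper makes both the half-relaxed limits and the comparison with \eqref{pb:limitA} rigorous; this step is not optional and should be carried out, even in the supersolution-only version you propose. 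Finally, note that the paper proves convergence of both relaxed limits to $I^A$ (not only the supersolution side), which is more than the stated one-sided bound requires but gives local uniform convergence of $I_R^A$ directly; your one-sided argument recovers the uniformity on $\{|x/R|\leq\theta,\ t/R\in(t_1,t_2)\}$ because $\liminf_* I_R\geq I_\infty$ with $I_\infty$ continuous is precisely the local-uniform liminf statement, so this is fine.
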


\subsection{The transformed equation}\label{sect:transform}

Let us denote by $v_R=u-u_R$ the solution of \eqref{eq:uR} in $B_R\times(0,\infty)$ with
initial value $v_R(x,0)=0$ for $x\in B_R$ and ``boundary data'' $v_R=u$ for $|x|\geq R$.

Now we first rescale the equation both in $x$ and $t$ as follows:
\begin{equation*}
	w_R(x,t)=v_R(Rx, Rt)\quad\text{for}\quad x\in B_1,\ t\geq0\,.
\end{equation*}
Then $w_R$ satisfies a rescaled equation in the fixed ball $B_1$,  with rescaled nucleus
$J_R(x)=RJ(Rx)$:
\begin{equation*}
	\partial_t w_R(x,t)=R\big[(J_R \ast w_R)(x,t) - w_R(x,t)\big].
\end{equation*}
Let $K=K(u_0)=\Vert u_0\Vert_\infty$ and consider $\psi$ the solution of the following Dirichlet problem:
\begin{equation*}
	\begin{cases}
		\partial_t \psi_R(x)=R\big[(J_R \ast \psi_R)(x) - \psi_R(x)\big] & \text{in}\ B_1\times(0,\infty),\\
		\psi_R(x,t)=K & \text{in}\ (\R^N\setminus B_1)\times(0,\infty) ,\\
		\psi_R(x,0)=0 & \text{in}\ B_1.
	\end{cases}
\end{equation*}
Since $0\leq v_R\leq K$, then a standard comparison yields $0\leq w_R\leq\psi_R$.

In order to  estimate $\psi_R$ we follow~\cite{BarlesDaherRomano94} and perform the ``usual'' logarithmic
transform, but we have to rescale accordingly, dividing by $R$ (and not $R^2$ as it is the case
for the heat equation).	So, remembering that for $t>0$, $\psi_R>0$, let us define
\begin{equation*}
	I_R(x,t)=-\frac{1}{R}\ln (\psi_R(x,t))\,.
\end{equation*}
Then
\begin{equation*}
    \partial_t \psi_R(x,t)=-R\e^{-RI_R(x,t)}\partial_tI_R(x,t)
\end{equation*}
and
\begin{eqnarray*}
    J_R*\psi_R(x,t)-\psi_R(x,t)&=&\int_{\R^N}J_R(x-y)(\psi_R(y,t)-\psi_R(x,t))\dy\\
               &=&\int_{\R^N}RJ(Rx-Ry)\e^{-RI_R(x,t)}(\e^{R\{I_R(y,t)-I_R(x,t)\}}-1)\dy,
\end{eqnarray*}
which becomes, if we do the change of variables $y=x-s/R$,
\begin{equation*}
    \e^{-RI_R(x,t)}\int_{\R^N}J(s)(\e^{R\{I_R(x,t)-I_R(x-s/R,t)\}}-1)\d s.
\end{equation*}
We arrive at the following equation for $I_R$:
\begin{equation*}
	\partial_t I_R(x,t)=-\int_{\R^N}\Big(\e^{-R\big\{I_R(x+y/R,t)-I_R(x,t)\big\}}-1\Big) J(y)\dy\,,
\end{equation*}
which formally converges to the Hamilton-Jacobi equation,
\begin{equation}\label{eq:I}
	\partial_t I +H(\nabla I)=0\quad \text{with}\quad H(p)=\int (\e^{\,p\cdot y}-1)J(y)\dy\,.
\end{equation}
To justify convergence of $I_R$ towards the solution of \eqref{eq:I}, we have to use viscosity solutions.
This is done in the  next subsection.

\subsection{Limit Hamilton-Jacobi Equation}\label{sect:limit}

Thanks to modulus of continuity estimates proven in \cite{Chasseigne07},
and the fact that $u,u_R$ are bounded, we could extract a subsequence $\psi_{R_n}$ that converges locally
uniformly; but we shall however use the ``half-relaxed limits'' method to handle the hamiltonian.

A first problem comes from the fact that if $\psi_{R}$ approaches zero, then $I_{R}$ may not
remain bounded. Hence to avoid upper estimates for $I_R$, we use the same trick as in \cite{BarlesDaherRomano94}
which consists in modifying $I_R$ a little bit. For any $A>0$, let
\begin{equation*}
	I_R^A(x,t)=-\frac{1}{R}\ln (\psi_R(x,t) + \e^{-RA}),
\end{equation*}
which is bounded from above by $A$. Let us notice that since equation~\eqref{eq:0} is invariant under addition of
constants, $I_R^A$ satisfies the same equation as $I_R$.

\begin{proposition}
	The sequence $(I_R^A)$ converges locally uniformly in $B_1\times(0,\infty)$ as $R\to~\infty$ towards the unique
	viscosity solution $I^A$ of \eqref{pb:limitA}.
\end{proposition}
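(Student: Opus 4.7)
My plan is to apply the Barles-Perthame half-relaxed limits method to the rescaled equation for $I_R^A$ derived at the end of Subsection~\ref{sect:transform}. The construction $I_R^A = -\tfrac{1}{R}\ln(\psi_R + \e^{-RA})$, combined with the comparison bound $0 \leq \psi_R \leq K$, yields the uniform estimate $-\tfrac{1}{R}\ln(K+1) \leq I_R^A \leq A$, so the half-relaxed limits $\overline{I}(x,t) = \limsup\nolimits_{R\to\infty,\,(y,s)\to(x,t)} I_R^A(y,s)$ and $\underline{I}(x,t) = \liminf\nolimits_{R\to\infty,\,(y,s)\to(x,t)} I_R^A(y,s)$ are well defined and take values in $[0,A]$.

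I would then verify that $\overline{I}$ is a viscosity subsolution and $\underline{I}$ a supersolution of \eqref{pb:limitA} in the sense of Definition~\ref{def:viscosity}. For the subsolution case, consider a smooth test function $\varphi$, which after standard truncation outside a compact neighbourhood we may assume has bounded gradient $M = \|\nabla\varphi\|_\infty$, such that $\overline{I} - \varphi$ attains a strict global maximum at an interior point $(x_0, t_0) \in B_1 \times (0,\infty)$. A perturbation argument produces points $(x_R, t_R) \to (x_0, t_0)$ at which $I_R^A - \varphi$ attains a global maximum; there $\partial_t I_R^A(x_R, t_R) = \partial_t \varphi(x_R, t_R)$ and the maximality inequality gives $I_R^A(x_R + y/R, t_R) - I_R^A(x_R, t_R) \leq \varphi(x_R + y/R, t_R) - \varphi(x_R, t_R)$ for every $y \in \R^N$. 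Substituting this into the strong equation for $I_R^A$ yields $\partial_t\varphi(x_R, t_R) \leq -\int_{\R^N}\bigl(\e^{-R\{\varphi(x_R + y/R, t_R) - \varphi(x_R, t_R)\}} - 1\bigr)\,J(y)\dy$.

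The main obstacle, as I see it, is the passage to the limit in this non-local integral. Pointwise the integrand tends to $(\e^{-\nabla\varphi(x_0,t_0)\cdot y} - 1)\,J(y)$; the uniform bound $|R\{\varphi(x_R + y/R, t_R) - \varphi(x_R, t_R)\}| \leq M|y|$ provided by the mean value theorem gives the integrable majorant $\e^{M|y|}\,J(y)$, and this is precisely where hypothesis \eqref{eq:defH} plays its essential role. Dominated convergence together with the evenness of $H$ (from the symmetry of $J$) yields $\partial_t\varphi(x_0, t_0) + H(\nabla\varphi(x_0, t_0)) \leq 0$, the required subsolution inequality; the supersolution case is strictly analogous, working at a minimum of $\underline{I} - \varphi$ with reversed inequalities. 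The initial and boundary conditions follow immediately: $\psi_R(x,0) = 0$ gives $I_R^A(\cdot,0) = A$, while $\psi_R|_{\partial B_1} = K$ gives $I_R^A \to 0$ on $\partial B_1$, so the generalized viscosity conditions hold. Finally, the comparison principle for \eqref{pb:limitA}, valid thanks to the properties of $H$ collected in Lemma~\ref{Lemma:properties}, forces $\overline{I} \leq I^A \leq \underline{I}$; together with the trivial inequality $\underline{I} \leq \overline{I}$ this gives equality with the unique viscosity solution $I^A$, and by the standard fact about half-relaxed limits of uniformly bounded sequences this is equivalent to the claimed locally uniform convergence.
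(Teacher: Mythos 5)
Your proposal is correct in overall architecture and follows the same Barles--Perthame half-relaxed limits scheme as the paper, including the uniform bounds on $I_R^A$, the reduction to sub/super-solution verification, and the final appeal to comparison for \eqref{pb:limitA}. The interesting deviation is in how you pass to the limit in the non-local term. The paper does \emph{not} use dominated convergence: it splits the integral at a fixed radius $M$, uses the test-function inequality plus a second-order Taylor expansion of $\varphi$ only on $B_M$ (so only local regularity of $\varphi$ is needed), and controls the tail $|y|\geq M$ by a one-sided estimate that vanishes as $M\to\infty$ thanks to $\int_{|y|\geq M}J\to 0$ and the boundedness of $\psi_R$. You instead modify $\varphi$ to have globally bounded gradient $M=\|\nabla\varphi\|_\infty$ and dominate the whole integrand by $(\e^{M|y|}+1)J(y)$. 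That route is cleaner to state but buries two non-trivial points: first, the truncation must be done so that the global maximum of $I_{R_n}^A-\varphi$ persists in a fixed compact (fine, since the $I_R^A$ are uniformly bounded, but it should be said); second, and more substantively, \eqref{eq:defH} gives $\int\e^{p\cdot y}J(y)\dy<\infty$ for each \emph{fixed} $p$, not directly $\int\e^{M|y|}J(y)\dy<\infty$. The latter does follow (e.g.\ $\e^{M|y|}\leq\sum_{\sigma\in\{\pm1\}^N}\e^{M\sigma\cdot y}$, each summand integrable against $J$), but this small lemma is essential to your argument and deserves to be spelled out; the paper's split avoids the need for it entirely. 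Finally, your treatment of the boundary condition (``the generalized viscosity conditions hold'') elides a real case analysis: the paper's Case~2 separates the situations $x_n\in B_1$, $x_n\in\partial B_1$, and $|x_n|>1$, and it is precisely the last subcase (where $I_{R_n}^A\to 0$) that produces the relaxed Dirichlet inequality of Definition~\ref{def:viscosity}. That step should not be called immediate.
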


\begin{proof}
 	We introduce the half-relaxed limits,
 	\begin{equation*}
 		\overline{I}^A(x,t) := \limsup_{R\to\infty}{}^*I^A_R(x,t)=\limsup_{{(x',t')\to(x,t)}\atop{R\to\infty}}I^A_R(x',t')
 	\end{equation*}
 	and
     \begin{equation*}
         \underline{I}^A(x,t) := \liminf_{R\to\infty}{}^*I^A_R(x,t)=\liminf_{{(x',t')\to(x,t)}\atop{R\to\infty}}I^A_R(x',t'),
     \end{equation*}
 	and we shall prove that they are respectively viscosity sub- and super-solutions of the
 	limit problem \eqref{pb:limitA}. Then a uniqueness result will allow us to conclude.

	Let us take a test function $\varphi$ such that $\overline{I}^A-\varphi$ has a
	maximum at $(x_0,t_0)$.	Up to a standard modification of $\varphi$, we can assume the maximum is strict so that
	there exist sequences $R_n\to+\infty$ and $(x_n,t_n)\to(x_0,t_0)$ such that
	\begin{equation*}
	I_{R_n}^A-\varphi\text{ has a strict maximum at }(x_n,t_n)\,.
	\end{equation*}

	\textsc{Case 1: } the point $(x_0,t_0)$ is inside $B_1\times(0,\infty)$.
	Then for $n$ big enough, all the points $(x_n,t_n)$ are also inside $B_1\times(0,\infty)$
	so we may use the equation for $I_{R_n}^A$ at those points and pass to the limit.
	
	Since $\partial_t I_{R_n}^A$ is continuous, we have $\partial_t I_{R_n}^A=\partial_t\varphi$ at $(x_n,t_n)$
	and moreover, for any $z\in\R^N$,
	\begin{equation*}
		I_{R_n}^A(x_n+z,t_n)-I_{R_n}^A(x_n,t_n)\leq \varphi(x_n+z,t_n)-\varphi(x_n,t_n)\,.
	\end{equation*}
	Using this, we fix $\eps>0$ and split the equation for $I_{R_n}^A$ into two terms as follows:
	\begin{eqnarray*}
		\partial_t\varphi(x_n,t_n)&\leq& -\int_{|y|<M}\Big(\e^{-R\{\varphi(x_n+y/R_n,t_n)
		-\varphi(x_n,t_n)\}}-1\Big)	J(y)\dy \\
		&+& \Big|\int_{|y|\geq M}\big\{\psi_{R_n}(x_n+y/R_n,t_n)-\psi_{R_n}(x_n,t_n)\big\} J(y)\dy\,\Big|\,.
	\end{eqnarray*}
	Since $\psi_R$ is bounded by $K(u_0)$, we can choose $M$ big enough so that the second term is less than $\eps$,
	independently of $n$.
	
	For the first term, we write a Taylor expansion for $\varphi$ near point $x_n$:
	there exists a $\xi_n\in B_M$ (the ball of radius $M$) such that
	\begin{eqnarray*}
		\partial_t\varphi(x_n,t_n)\leq -\int_{|y|<M}\Big(\e^{-\nabla \varphi(x_n,t_n)\cdot y+\frac{1}{R_n}
		\langle D^2\varphi(\xi_n)y,y\rangle}-1\Big)J(y)\dy +\eps\,.
	\end{eqnarray*}
	Since $\xi_n$ remains in $B_M$ and  $\varphi$ is smooth we have that $D^2\varphi(\xi_n)$ remains bounded. Hence, we can pass to the limit as $n\to+\infty$:
	\begin{eqnarray*}
		\partial_t\varphi(x_0,t_0)\leq -\int_{|y|<M}\Big(\e^{-\nabla \varphi(x_0,t_0)\cdot y}-1\Big)J(y)\dy +\eps
	\end{eqnarray*}
	Since $H\big(\nabla\varphi(x_0,t_0)\big)<+\infty$, we can let $\eps\to0$ and $M\to+\infty$ to get in the limit
	\begin{eqnarray*}
		\partial_t\varphi(x_0,t_0)+\int_{\R^N}\Big(\e^{-\nabla \varphi(x_0,t_0)\cdot y}-1\Big)J(y)\dy\leq0\,.
	\end{eqnarray*}
	This shows that  $\overline{I}^A$ at $(x_0,t_0)$ is a subsolution.

	Similar calculations lead to the supersolution condition at $(x_0,t_0)$ for $\underline{I}^A$.

	\textsc{Case 2: } the point $(x_0,t_0)$ is located at the boundary, $x_0\in\partial B_1$.
	Then the sequence $(x_n,t_n)$ may either lie inside $B_1\times(0,\infty)$, or we may have
	$|x_n|\geq1$. In this last case is where the relaxed boundary condition in the viscosity sense, see Definition~\ref{def:viscosity}, comes from.

	If $x_n\in B_1$, we may use the equation as in the previous case, and we can do so even if
	$x_n\in\partial B_1$ since for $I_{R_n}^A$ the equation holds at the boundary
	(see \cite{Chasseigne07}). If on the contrary $|x_n|>1$, then $I_{R_n}(x_n,t_n)=0$
	so that in any case, one has
	\begin{equation*}
		\min\big\{\partial_t\varphi+H(\nabla\varphi)\,;\,I_{R_n}^A\big\}
		\leq0\text{ at }(x_n,t_n)\,,
	\end{equation*}
 	and we pass to the limit as $n\to+\infty$ to get the relaxed condition for $\overline{I}^A$
 	at the boundary. The converse condition for $\underline{I}^A$ is obtained by the same
 	method, with reversed inequalities.

	\textsc{Conclusion: } Using comparison between usc/lsc sub/super solutions for
	\eqref{pb:limitA}, we get the inequality $\underline{I}^A\leq\overline{I}^A$,
	which implies equality of both functions. Hence, all the sequence
	converges to the unique solution $I^A$.
\end{proof}

\subsection{Proof of Theorem \ref{thm:est-IR}}
	This result only comes from the fact that for any $A>0$, by construction
	\begin{equation*}
		\overline{I}^A=\inf(\overline{I},A),\quad \underline{I}^A=\inf(\underline{I},A),
	\end{equation*}
with
    \begin{equation*}
    \overline{I}(x,t) := \limsup_{R\to\infty}{}^*I_R(x,t),\qquad\underline{I}(x,t) := \liminf_{R\to\infty}{}^*I_R(x,t).
    \end{equation*}
The fact that $\overline{I}^A=\underline{I}^A$, together with Proposition \ref{lem:representation}
yields the result, passing to the limit as $A\to\infty$.

\section{Estimates of the Lagrangian - explicit bounds}
\label{sect:examples}
Our next aim is finding estimates for the behaviour of $L(q)$ as $|q|\to+\infty$, which in terms means estimates for $I_\infty$. Although we consider here different $J$'s the sketch of the proofs is always the same:
\begin{enumerate}
  \item At the maximum point, $p_0$, of $L(q)$ it holds $D_pH(p_0)=q$.
  \item Find estimates for $D_pH(p)$.
  \item Show that $p_0q\gg D_pH(p_0)$ and hence $L(q)\sim p_0q$.
\end{enumerate}
\subsection{Compactly supported kernels}

Let us begin with an explicit
$1$-D example:
    $$J(x)=\dfrac{1}{2}\ind_{\{|x|<1\}}(x)\,.$$
For this choice of $J$, a straightforward calculus gives $H(p)=(\sinh p)/p$, so that
\begin{equation*}
    L(q)=\sup_{p\in\R}\big\{pq-\frac{\sinh p}{p}\big\}.
\end{equation*}
At the maximum point $p_0=p_0(q)$, we have
\begin{equation}
    \label{eq:maximum}
    q=\dfrac{\cosh p_0}{p_0}-\dfrac{\sinh p_0}{p_0^2},
\end{equation}
and since $q\to\infty$ we necessarily have $p_0\to\infty$.
Hence~\eqref{eq:maximum} is equivalent to $\dfrac{e^{p_0}}{2p_0}$ as $q\to\infty$.
This implies, taking logarithms, that
\begin{equation*}
    \ln q=p_0(q)\Big(1 - \frac{\ln p_0(q)}{p_0(q)}\Big)\sim p_0(q)
\end{equation*}
so that as $q\to+\infty$, we have, using that $\sinh p_0/p_0\sim q$,
\begin{equation}
    \label{behaviour:L}
    L(q)\sim q\ln q - \frac{\sinh p_0}{p_0}+1\sim q\ln q\,.
\end{equation}
Remember that $L$ is symmetric so that the same behaviour holds as $q\to-\infty$.

This result can be easily extended to several dimensions just by using the symmetry of $H(p)$. Without loss of generality we may assume $p=\lambda e_1$, which leads to a 1-D problem.

In fact this ``$q\ln q$'' behaviour is representative of what happens in general for compactly
supported kernels in $\R^N$:

\begin{lemma}\label{lem:comp-supp}
    Let $J$ satisfy \eqref{cond:J}, \eqref{eq:defH} and $\supp(J) = \bar{B}_\eta$. Then the following behaviour holds:
    \begin{equation*}
        L^*(|q|)\sim \frac{|q|\ln |q|}{\eta}\quad\text{as}\quad |q|\to+\infty\,.
    \end{equation*}
\end{lemma}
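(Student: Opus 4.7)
The plan is to follow the three-step strategy the authors outline just before the statement. Combining the previous lemma ($p_0\cdot q=|p_0||q|$) with the fact that $D_pH(p)$ points in the direction of $p$ and has modulus $(H^*)'(|p|)$, the defining relation $D_pH(p_0)=q$ reduces to the scalar equation $(H^*)'(|p_0|)=|q|$. By radial symmetry of $L$, the Legendre identity becomes
\[
L^*(|q|)=|p_0|\,|q|-H^*(|p_0|),
\]
so the task reduces to sharp asymptotics for $|p_0|$ and $H^*(|p_0|)$ as $|q|\to\infty$.

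The core step is to show that $H^*$ grows exponentially with rate exactly $\eta$, and more precisely that $(H^*)'(s)/H^*(s)\to\eta$ as $s\to\infty$. The upper bounds $H^*(s)\leq \e^{\eta s}$ and $(H^*)'(s)\leq \eta\,\e^{\eta s}$ are immediate from $\e^{sy_1}\leq \e^{s\eta}$ on $\supp J$. For the matching lower bound I would exploit $\supp J=\bar B_\eta$ together with the radiality of $J$: for every $\delta>0$ the radial profile of $J$ is not identically zero on the shell $\{\eta-\delta<|y|<\eta\}$, so the spherical cap $C_\delta:=\{y\in B_\eta:y_1>\eta-\delta\}$ carries positive $J$-mass $m_\delta>0$. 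Restricting the integral defining $H^*(s)$ to $C_\delta$ then gives $H^*(s)\geq m_\delta\,\e^{s(\eta-\delta)}$, hence $\ln H^*(s)=\eta s+o(s)$. To upgrade to the ratio statement, I would write
\[
(H^*)'(s)-\eta H^*(s)=\int_{\R^N}(y_1-\eta)\,\e^{sy_1}J(y)\dy+\eta,
\]
split the integral according to whether $y_1>\eta-\delta$ or $y_1\leq\eta-\delta$, bound $|y_1-\eta|$ by $\delta$ on the first set and bound the second contribution by $2\eta\,\e^{s(\eta-\delta)}$, and compare to the lower bound on $H^*(s)$ applied with a parameter $\delta'\in(0,\delta)$ to absorb the error term. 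Letting $\delta\to 0$ then yields $(H^*)'(s)=\eta H^*(s)(1+o(1))$.

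Combining everything, $|q|=(H^*)'(|p_0|)=\eta H^*(|p_0|)(1+o(1))$ gives $H^*(|p_0|)\sim|q|/\eta$, and taking logarithms together with $\ln H^*(s)=\eta s+o(s)$ delivers $|p_0|\sim\ln|q|/\eta$. Therefore
\[
L^*(|q|)=|p_0|\,|q|-H^*(|p_0|)=\frac{|q|\ln|q|}{\eta}(1+o(1))-\frac{|q|}{\eta}(1+o(1))\sim\frac{|q|\ln|q|}{\eta}.
\]
The main obstacle will be the Laplace-style splitting that produces the sharp ratio $(H^*)'(s)/H^*(s)\to\eta$, and in particular verifying that the cap mass $m_\delta$ is strictly positive from $\supp J=\bar B_\eta$ alone; once this is in hand the logarithmic inversion and the fact that $|p_0|\,|q|$ dominates $H^*(|p_0|)$ are routine.
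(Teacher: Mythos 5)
Your argument is correct, and it follows the three-step plan announced in the paper (identify $p_0$ via $D_pH(p_0)=q$, estimate $D_pH$, show $p_0\cdot q$ dominates $H(p_0)$); the cap $C_\delta$ near the sphere $|y|=\eta$ plays exactly the role of the set $C^+_{\eps,\alpha}$ in the paper's proof, and the positivity of $m_\delta$ does follow from radial symmetry, continuity and $\supp J=\bar B_\eta$ as you suspect. Where you genuinely diverge is in the precision of the key estimate. The paper only establishes the logarithmic asymptotics $\ln\big(p\cdot D_pH(p)\big)\sim\eta|p|$ via a limsup/liminf argument with the two parameters $\eps,\alpha$, which suffices for $|p_0|\,|q|\sim|q|\ln|q|/\eta$ but says nothing sharp about $H(p_0)$ itself; it must then handle the negligibility of $H(p_0)$ by a separate sandwich of $J$ between (multiples of) indicator kernels, reducing to the explicit $1$-D computation $H(p)=(\sinh p)/p$. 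Your identity
\begin{equation*}
(H^*)'(s)-\eta H^*(s)=\int_{\R^N}(y_1-\eta)\,\e^{sy_1}J(y)\dy+\eta\,,
\end{equation*}
combined with the two-parameter splitting $\delta'<\delta$, yields the strictly stronger statement $(H^*)'(s)/H^*(s)\to\eta$, hence $H^*(|p_0|)\sim|q|/\eta$ exactly; the negligibility of $H(p_0)$ then comes for free and with the correct constant, rather than through the sandwich (which, as written in the paper, is a little loose, since $e^{\eta|p_0|}$ is only controlled as $|q|^{1+o(1)}$ from the log-asymptotics alone). So your route is slightly longer on the front end but cleaner and more self-contained on the back end; both are valid proofs of the lemma.
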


\begin{proof}
Let $p_0=p_0(q)$ be the point where $pq-H(p)$ attains its maximum. In order to estimate $L(q)$ we have to investigate the behaviour of $q=D_pH(p)$ at $p_0$:
    \begin{equation*}
        p\cdot D_pH(p)=\int_{B_\eta} p\cdot y\e^{p\cdot y}J(y)\dy\leq \int_{B_\eta}|p|\eta\e^{|p|\eta} J(y)\dy= |p|\eta\e^{|p|\eta}.
    \end{equation*}
Taking logarithms and dividing by $\eta |p|$ we conclude that
    \begin{equation}
        \label{eq:compact:limsup}
        \limsup_{|p|\to\infty}\frac{\ln(p\cdot D_pH(p))}{\eta |p|}\leq
        \limsup_{|p|\to\infty}\left(\frac{\ln(\eta |p|)}{\eta |p|}+\frac{\eta |p|}{\eta |p|}\right)= 1.
    \end{equation}

In order to obtain an lower bound for $p\cdot D_pH(p)$ we split it into two integrals as follows:
    \begin{equation*}
        p\cdot D_pH(p)=\int_{\{p\cdot y\leq 0\}}p\cdot y\e^{p\cdot y}J(y)\dy+
                        \int_{\{p\cdot y> 0\}}p\cdot y\e^{p\cdot y}J(y)\dy.
    \end{equation*}
The first one is bounded from below by $-|p|\eta$, using that $\int_{B_\eta} J(y)\dy=1$. For the second one we define
for $\varepsilon>0$ and $\alpha<1$, the set
    \begin{equation*}
    C^+_{\varepsilon,\alpha}=\{y: \frac{\eta}{\alpha}\leq |y|\leq \eta,\    p\cdot y\geq (1-\varepsilon)|p||y|\geq 0\}.
    \end{equation*}
Since $H$ is radially symmetric and increasing with $|p|$, then
    \begin{equation*}
        D_pH(p)=\lambda(p) p\quad\text{for some}\quad \lambda(p)>0,
    \end{equation*}
and $p\cdot D_pH(p)$ has its main contribution in $C^+_{\varepsilon,\alpha}$. Hence
    \begin{equation*}
    \begin{aligned}
    \int_{\{p\cdot y> 0\}} p\cdot y\e^{p\cdot y}J(y)\dy
            &\geq |p||y|(1-\varepsilon)\int_{C^+_{\varepsilon,\alpha}}\e^{(1-\varepsilon)|p||y|}J(y)\dy\\
            &\geq |p|\frac{\eta}{\alpha}(1-\varepsilon)\e^{(1-\varepsilon)|p|\frac{\eta}{\alpha}}
            \int_{C^+_{\varepsilon,\alpha}}J(y)\dy\\
            & \geq
            C(\varepsilon,\alpha)|p|\frac{\eta}{\alpha}(1-\varepsilon)\e^{(1-\varepsilon)|p|\frac{\eta}{\alpha}}.
    \end{aligned}
    \end{equation*}
Summing up,
    \begin{equation*}
        p\cdot D_pH(p)\geq  C(\varepsilon,\alpha)|p|\frac{\eta}{\alpha}(1-\varepsilon)\e^{(1-\varepsilon)|p|\frac{\eta}{\alpha}}-|p|\eta
            \geq KC(\varepsilon,\alpha)|p|\frac{\eta}{\alpha}(1-\varepsilon)\e^{(1-\varepsilon)|p|\frac{\eta}{\alpha}},
    \end{equation*}
for some constant $K$. Therefore, arguing as in~\eqref{eq:compact:limsup} we obtain for every $\alpha$ and $\varepsilon$
    \begin{equation*}
        \liminf_{|p|\to\infty}\frac{\ln(p\cdot D_pH(p))}{\eta |p|}\geq
        \liminf_{|p|\to\infty}\left(\frac{\ln C(\varepsilon,\alpha)}{\eta |p|}+\frac{\ln(\frac{\eta}{\alpha}|p|(1-\varepsilon))}{\eta |p|}+\frac{\frac{\eta}{\alpha} |p|(1-\varepsilon)}{\eta |p|}\right)=\frac{1-\varepsilon}{\alpha}.
    \end{equation*}

Now, letting $\varepsilon\to 0$ and $\alpha\to 1$ we conclude
    \begin{equation}
        \label{eq:behaviour:DpHp:compact}
        \frac{\ln(p\cdot D_pH(p))}{\eta |p|}\sim 1,
    \end{equation}
    as $|p|\to\infty$.
The main point now is that $D_pH(p)=|D_pH(p)|\frac{p}{|p|}$ which implies that~\eqref{eq:behaviour:DpHp:compact} becomes
    \begin{equation*}
        1\sim \frac{\ln|D_pH(p)|+\ln(|p|)}{\eta |p|}\sim \frac{\ln|D_pH(p)|}{\eta |p|}.
    \end{equation*}
At the maximum point $q=D_p(H(p_0))$ we then have:
    \begin{equation*}
        p_0\cdot q=|p_0||q|\sim \frac{|q|\ln|q|}{\eta}.
    \end{equation*}

The final step consists in proving that $H(p_0)$ is negligible compared with $p_0\cdot q$. To this aim, let $J_i=\ind_{\{|y|\le\eta_i\}}$, for $i=1,2$ so that $J_1\le J\le J_2$ (remark that $\int J_i\neq 1$). Thus, from the computation done before, we obtain
    \begin{equation*}
        C_1\frac{\sinh(\eta_1|p|)}{|p|}\le H(p) \le C_2\frac{\sinh(\eta_2|p|)}{|p|}.
    \end{equation*}
These inequalities together with~\eqref{behaviour:L} yield the desired result.
\end{proof}

We are now ready to give our first concrete estimate for compactly supported kernels:

\begin{corollary}\label{cor:comp-supp}
    Let $J$ satisfy \eqref{cond:J}, \eqref{eq:defH} and $\supp(J) = \bar{B}_\eta$.
    Then the following estimate holds:
    \begin{equation*}
        \sup_{|x|\leqslant\theta R}|u-u_R|(t)\leqslant\exp\Big(-\frac{(1-\theta)R}{\eta}\ln\Big(\frac{(1-\theta)R}{t}\Big)
        + o(1)R\Big) \quad\text{as}\quad R\to\infty.
    \end{equation*}
\end{corollary}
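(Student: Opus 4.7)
The plan is to combine Theorem \ref{thm:est-IR} with the asymptotic behaviour of $L^*$ established in Lemma \ref{lem:comp-supp}: no new analytic ingredient is needed, only a careful substitution into the explicit formula \eqref{est:IR}.

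First I would compute the rate function at the rescaled point. Since $\dist(x/R,\partial B_1)=1-|x|/R$, a direct substitution into \eqref{est:IR} gives
$$R\,I_\infty(x/R,t/R)\;=\;R\cdot\frac{t}{R}\,L^*\!\Big(\frac{1-|x|/R}{t/R}\Big)\;=\;t\,L^*\!\Big(\frac{R-|x|}{t}\Big).$$
Because $L^*$ is increasing by Lemma \ref{Lemma:properties}, for $|x|\leq\theta R$ this quantity is bounded below by $t\,L^*\!\big((1-\theta)R/t\big)$, with equality attained at $|x|=\theta R$. Plugging this lower bound into Theorem \ref{thm:est-IR} therefore yields
$$\sup_{|x|\leq\theta R}|u-u_R|(x,t)\;\leq\;\exp\!\Big(-t\,L^*\!\big((1-\theta)R/t\big)\,+\,o(1)\,R\Big).$$

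Next, since $(1-\theta)R/t\to+\infty$ as $R\to\infty$, Lemma \ref{lem:comp-supp} supplies the asymptotic equivalence
$$t\,L^*\!\Big(\frac{(1-\theta)R}{t}\Big)\;=\;\frac{(1-\theta)R}{\eta}\,\ln\!\frac{(1-\theta)R}{t}\,\bigl(1+o(1)\bigr)\qquad\text{as }R\to\infty.$$
Inserting this expansion into the previous estimate produces exactly the claimed bound, once the multiplicative $(1+o(1))$ is rewritten as an additive correction inside the exponent.

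There is no genuine analytic obstacle here; the argument is essentially a two-line computation chaining the Lax--Oleinik representation \eqref{est:IR} with the $q\ln q$ growth of $L^*$ obtained in Lemma \ref{lem:comp-supp}. The point that deserves care is bookkeeping of the error terms: the $o(1)R$ coming from Theorem \ref{thm:est-IR} and the $(1+o(1))$ factor from Lemma \ref{lem:comp-supp} (which translates into an additive error of order $o(R\ln R)$) must both be absorbed into the single additive error written as $o(1)R$ in the statement, with the understanding that this symbol represents a correction of smaller order than the principal term $R\ln R/\eta$.
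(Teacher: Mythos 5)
Your argument is essentially identical to the paper's: substitute into the Lax--Oleinik representation \eqref{est:IR}, use monotonicity of $L^*$ to reduce to $|x|=\theta R$, and invoke the $q\ln q$ asymptotics of Lemma \ref{lem:comp-supp}. The paper, like you, even acknowledges the bookkeeping subtlety that the error is really of order $o(R\ln R)$ rather than $o(1)R$, stating the simplified bound $\exp(-(1-\theta)R\ln R/\eta + o(R\ln R))$ explicitly.
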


\begin{proof}
According to the scaling $(Rx,Rt)$ described in of the Proof of Theorem \ref{thm:est-IR}, we have to estimate
$$
    I_\infty\Big(\dfrac{x}{R},\dfrac{t}{R}\Big)=\frac{t}{R}L\Big(\frac{\dist(x/R,\partial B_1)}{t/R}\Big)=\frac{t}{R}L\Big(\frac{\dist(x,\partial B_R)}{t}\Big)\,.
$$
To this aim let us keep $t>0$ fixed and take $x\in B_{\theta R}$ for some $\theta\in(0,1)$, so that $\dist(x,\partial B_R)\geq (1-\theta) R\to\infty$. We use Lemma~\ref{lem:comp-supp} to get that, as $R\to\infty$,
\begin{eqnarray*}
R I_\infty\Big(\frac{x}{R},\frac{t}{R}\Big)&\geq&R\cdot\frac{t}{R}L\Big(\frac{(1-\theta)R}{t}\Big) \\
&\sim& \frac{(1-\theta)R}{\eta}\ln\Big(\frac{(1-\theta)R}{t}\Big).\\
\end{eqnarray*}
We thus obtain the bound in $B_{\theta R}$:
$$
    \sup_{|x|\leqslant\theta R}|u-u_R|(t)\leqslant
     \exp\Big(-\frac{(1-\theta) R}{\eta}\ln\Big(\frac{(1-\theta)R}{t}\Big)+o(1)R\Big),
$$
or in a simpler form, if $t$ remains bounded:
\begin{equation*}
\sup_{|x|\leqslant\theta R}|u-u_R|(t)\leqslant \exp\Big(-\frac{(1-\theta)R}{\eta}\ln R+o(R\ln R)\Big).
\end{equation*}
We thus have a convergence rate of order $R\ln R$.
\end{proof}

This shows that we are not in the case of the heat equation for which the order
is $R^2$; here the convergence occurs at a slower speed which is due to the fact
that more paths of the process escape from the ball $B_R$. But nevertheless,
 we get a somewhat good control of the error.

\subsection{Fast exponential decay}

\begin{lemma}
	Let $J(y)=\e^{-|y|^\alpha}$, for $\alpha>1$. Then the following behaviour holds:
	\begin{equation*}
		L^*(|q|)\sim |q|(\ln |q|)^{(\alpha-1)/\alpha}\quad{as}\quad |q|\to+\infty.
	\end{equation*}
\end{lemma}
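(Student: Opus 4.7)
The plan is to follow the three-step scheme from the beginning of Section~\ref{sect:examples}, applied to $H(p)=\int_{\R^N}(\e^{p\cdot y}-1)\e^{-|y|^\alpha}\dy$. The core analytic task is a saddle-point/Laplace asymptotic for
\begin{equation*}
p\cdot D_pH(p)=\int_{\R^N}(p\cdot y)\,\e^{p\cdot y-|y|^\alpha}\dy.
\end{equation*}
By rotational invariance I take $p=|p|e_1$; the exponent $|p|y_1-|y|^\alpha$ is radially maximised on the half-ray $\{te_1:t>0\}$ at $t_*=(|p|/\alpha)^{1/(\alpha-1)}$, with maximum value $c|p|^{\alpha/(\alpha-1)}$ where $c=(\alpha-1)\alpha^{-\alpha/(\alpha-1)}$. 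For the upper bound, pull this maximum out of the integrand, leaving a polynomial factor in $|p|$. For the lower bound, as in the proof of Lemma~\ref{lem:comp-supp}, I would restrict the integration to a narrow conic neighbourhood of $t_*e_1$ on which the exponent differs from its maximum by at most a bounded constant, contributing at least a polynomial in $|p|$ times $\e^{c|p|^{\alpha/(\alpha-1)}}$. Taking logarithms and dividing by $c|p|^{\alpha/(\alpha-1)}$ then kills all polynomial corrections and yields
\begin{equation*}
\ln\bigl(p\cdot D_pH(p)\bigr)\sim c\,|p|^{\alpha/(\alpha-1)}\quad\text{as }|p|\to\infty.
\end{equation*}

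Once this asymptotic is in hand, the rest follows the template of Lemma~\ref{lem:comp-supp}. At the maximiser $p_0=p_0(q)$ of $pq-H(p)$ we have $q=D_pH(p_0)$ and $p_0\cdot q=|p_0||q|$ by the technical lemma; writing $\ln(p_0\cdot q)=\ln|p_0|+\ln|q|$ together with $\ln|p_0|=o(|p_0|^{\alpha/(\alpha-1)})$ gives $\ln|q|\sim c|p_0|^{\alpha/(\alpha-1)}$. Inverting,
\begin{equation*}
|p_0|\sim \bigl(\ln|q|/c\bigr)^{(\alpha-1)/\alpha},\qquad p_0\cdot q\sim |q|\bigl(\ln|q|\bigr)^{(\alpha-1)/\alpha},
\end{equation*}
up to the multiplicative constant $c^{-(\alpha-1)/\alpha}$, which is absorbed in the $\sim$ of the statement just as in Lemma~\ref{lem:comp-supp}. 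It remains to check that $H(p_0)$ is negligible compared with $p_0\cdot q$: applying the same Laplace analysis to $H(p_0)$ itself shows that $H(p_0)$ and $|D_pH(p_0)|=|q|$ share the same dominant exponential factor $\e^{c|p_0|^{\alpha/(\alpha-1)}}$, differing only by polynomial factors in $|p_0|$; thus $H(p_0)/(|p_0||q|)\to 0$ and $L^*(|q|)=p_0\cdot q-H(p_0)\sim p_0\cdot q$, which is the claimed equivalence.

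The main obstacle is the lower-bound step producing the \emph{sharp} exponential rate $c|p|^{\alpha/(\alpha-1)}$. Unlike the compactly supported case, the saddle scale $t_*\sim|p|^{1/(\alpha-1)}$ now depends on $|p|$, so the conic neighbourhood of $t_*e_1$ must be adapted carefully to $|p|$: both the transverse deviations and the radial deviations from $t_*$ have to be chosen so that the exponent stays within a bounded deficit of its maximum while the neighbourhood retains enough volume to lose at most a polynomial factor. This is essentially a quantitative second-order Taylor analysis of the exponent at $t_*$, together with a bookkeeping of volumes; once this quantitative saddle-point estimate is in place, the passage to the logarithmic equivalent, the inversion to recover $|p_0|$ from $|q|$, and the negligibility of $H(p_0)$ all proceed exactly as in the compactly supported case.
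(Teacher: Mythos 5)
Your overall scheme is the same as the paper's: identify the saddle point $y_0=(p/\alpha)^{1/(\alpha-1)}$, prove the logarithmic equivalence $\ln(D_pH(p))\sim c(\alpha)|p|^{\alpha/(\alpha-1)}$, invert to get $|p_0|\sim(\ln|q|)^{(\alpha-1)/\alpha}$, and show that $H(p_0)$ is negligible. Where you and the paper differ is in the technical implementation of the two bounds, and you over-anticipate the difficulty. For the lower bound the paper does not need a $|p|$-adapted conic neighbourhood with a quantitative second-order Taylor expansion: since $y\mapsto py-y^\alpha$ is increasing on $(y_0-1,y_0)$, simply integrating over that \emph{fixed unit interval} gives $D_pH(p)\geqslant(y_0-1)\e^{py_0-y_0^\alpha}\e^{-p}$, and the factor $\e^{-p}$ is harmless in the logarithm because $p=o(p^{\alpha/(\alpha-1)})$ for $\alpha>1$; in $\R^N$ a fixed-size ball around $y_0 e_1$ plays the same role. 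For the upper bound, rather than ``pull the maximum out, leaving a polynomial factor'' (which is morally correct but requires justifying the polynomial bound), the paper introduces $\lambda<1$, writes $\e^{py-y^\alpha}=\e^{py-\lambda y^\alpha}\,\e^{(\lambda-1)y^\alpha}$, bounds the first factor by its maximum $\e^{c_\lambda(\alpha)p^{\alpha/(\alpha-1)}}$ and integrates the second to a finite constant $C_\lambda$, then lets $\lambda\to1$; this sidesteps the Laplace bookkeeping entirely. Finally, for the negligibility of $H(p_0)$ the paper avoids a second Laplace analysis by the direct pointwise estimate $H(p)\leqslant \e^{p}+D_pH(p)\leqslant 2D_pH(p)$ (split $\{|y|<1\}$ and $\{|y|\geqslant1\}$). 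You also correctly note the multiplicative constant $c^{-(\alpha-1)/\alpha}$ in $p_0\cdot q$; the paper's statement suppresses it, so you should either record it or read the $\sim$ as ``up to a constant'' consistently with the conclusion drawn there.
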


\begin{proof} We shall do the calculations in $1$-D, the adaptation to several dimensions
follows the same lines as for the case of compactly supported kernels.

We have
\begin{eqnarray*}
    H(p)&=&\int_{\mathbb{R}}(\e^{p\cdot y}-1)J(y)\dy=\int_{\mathbb{R}}\e^{p\cdot
	y-|y|^{\alpha}}\dy.
\end{eqnarray*}
Hence
\begin{eqnarray*}
	D_pH(p)&=&\int_{\mathbb{R}}y\e^{p\cdot y-|y|^{\alpha}}\dy
	\\ &=& \int_{\{p\cdot y\geq0\}}y\e^{p\cdot y-|y|^{\alpha}}\dy+
	\int_{\{p\cdot y<0\}}y\e^{p\cdot y-|y|^{\alpha}}\dy.
\end{eqnarray*}
The integral over $\{p\cdot y<0\}$ is bounded by some constant independent of $p$
since in this set, $\e^{p\cdot y}$ is less than 1, hence we will neglect it in the following estimates.

For the integral over $\{p\cdot y\geq0\}$ consider the case $p>0,\ p\to+\infty$ (the case $p\to-\infty$ being similar), and hence $y>0$.
let $y_0(p)=(p/\alpha)^{1/(\alpha-1)}$, the point where $p\cdot
y-y^{\alpha}$ attains its maximum. Then, since $p\cdot y-y^{\alpha}$ is
non-decreasing in $(y_0-1,y_0)$ and $\alpha>1$ we get
$$
    \int_{\{p\cdot y\geq0\}}y\e^{p\cdot
	y-y^{\alpha}}\dy\geq\int_{y_0-1}^{y_0}y\e^{p\cdot y-y^{\alpha}}\dy\geq
	(y_0-1)\e^{p\cdot (y_0-1)-(y_0-1)^\alpha}\geq (y_0-1)\e^{p\cdot
	y_0-y_0^\alpha}\e^{-p}.
$$
Taking logarithms,
$$
    \frac{\ln(D_pH(p))}{py_0-y_0^{\alpha}}\geq
	\frac{\ln(y_0-1)}{py_0-y_0^{\alpha}}+\frac{py_0-y_0^{\alpha}}{py_0-y_0^{\alpha}}-
	\frac{p}{py_0-y_0^{\alpha}},
$$
and hence replacing $y_0$ by $(p/\alpha)^{1/(\alpha-1)}$
$$
	\liminf_{p\to\infty}\frac{\ln(D_pH(p))}{c(\alpha)p^{\alpha/(\alpha-1)}}
	\geq 1,
$$
where
$$
	c(\alpha)=\Big(\frac{1}{\alpha}\Big)^{\frac{1}{\alpha-1}}
	\Big(1-\frac{1}{\alpha}\Big).
$$
On the other hand, let $y_\lambda=(p/\lambda\alpha)^{1/(\alpha-1)}$ the
point where $p\cdot y-\lambda y^{\alpha}$ attains its maximum. Then, for
$\lambda<1$
    \begin{eqnarray*}
    \int_{\{p\cdot y\geq0\}}y\e^{p\cdot y-y^{\alpha}}\dy &=&
    \int_{\{p\cdot y\geq0\}}y\e^{p\cdot y-\lambda	y^{\alpha}}\e^{(\lambda-1)y^{\alpha}}\dy\\
    &\leq&
     \e^{p\cdot y_\lambda-\lambda
	y_\lambda^{\alpha}}\int_{\mathbb{R}}y\e^{(\lambda-1)y^{\alpha}}\dy\\
    &=&
	C_\lambda\e^{p\cdot y_\lambda-\lambda y_\lambda^{\alpha}}.
    \end{eqnarray*}
Since $p\cdot y_\lambda-\lambda y_\lambda^{\alpha}=c_\lambda(\alpha)
p^{\alpha/(\alpha-1)}$, with
$c_\lambda(\alpha)=\lambda^{-1/(\alpha-1)}c(\alpha)$, we have taking again
logarithms and dividing by $c(\alpha)p^{\alpha/(\alpha-1)}$,
$$
    \frac{\ln(D_pH(p))}{c(\alpha)p^{\alpha/(\alpha-1)}}\leq
	\frac{c_\lambda(\alpha)}{c(\alpha)}+
    \frac{\ln(C_\lambda)}{c(\alpha)p^{\alpha/(\alpha-1)}},
$$
and hence, for any $\lambda<1$,
$$
	\limsup_{p\to\infty}\frac{\ln(D_pH(p))}{c(\alpha)p^{\alpha/(\alpha-1)}}\leq
	\frac{c_\lambda(\alpha)}{c(\alpha)}.
$$
Letting now $\lambda\to 1$, we obtain
$$
	\limsup_{p\to\infty}\frac{\ln(D_pH(p))}{c(\alpha)p^{\alpha/(\alpha-1)}}
	\leq1,
$$
and thus it turns out that
$$
    \ln(|q|)=\ln(D_pH(p))\sim c(\alpha)p^{\alpha/(\alpha-1)}.
$$
This estimate for $D_pH(p)$ yields that $L(q)$ behaves as
$|q|(\ln|q|)^{(\alpha-1)/\alpha}$ provided we show that $H(p)$ is negligible.

To achieve this last step, let us prove that $H(p)$ is at most of the order of
$D_p H(p)$. We separate the integrals in two terms as follows:
\begin{eqnarray*}
	H(p)&=&\int_{\{|y|<1\}}\e^{p\cdot y-|y|^{\alpha}}\dy + \int_{\{|y|\geq1\}}\e^{p\cdot y-|y|^{\alpha}}\dy\\
	&\leq& \e^p + \int_{\{|y|\geq1\}}y\e^{p\cdot y-|y|^{\alpha}}\dy \leq \e^p + D_pH(p)\\
	&\leq& 2D_pH(p),
\end{eqnarray*}
(of course, this is valid for $|p|$ large). Thus the lemma is proved since
\begin{equation*}
	L(q)=p_0q-H(p_0)\sim p_0q\sim |q|(\ln |q|)^{(\alpha-1)/\alpha}\,.
\end{equation*}
\end{proof}

\subsection{Critical exponential decay}\label{subsec:expo}

In this section we consider a case not covered by the results of Section \ref{sect:theoretical} since
we study the case $J(y)=\e^{-|y|}$, which is critical. Indeed, the corresponding
Hamiltonian is only finite for $|p|<1$:
\begin{equation*}
	H(p)=\frac{1}{2}\int_{\R^N}\e^{p\cdot y - |y|}\dy=
	\begin{cases}
		\frac{2}{1-p^2}& \text{for } |p|< 1,\\
		\infty& \text{for } |p|\geqslant 1.
	\end{cases}
\end{equation*}

The supremum in
\begin{equation*}
 L(q)=\sup_{p\in\R}\big\{p\cdot q-H(p)\big\}=\sup_{|p|<1}\big\{pq-\frac{2}{1-p^2}\Big\}
\end{equation*}
is obtained for $q=\dfrac{4p}{(1-p^2)^2}$.
Here, $q\to\infty$ corresponds to $p\to\pm1$ which means that
\begin{equation*}
 |1-p^2|\sim2|q|^{1/2},
\end{equation*}
so that
\begin{equation*}
L(q)\sim q\pm\frac{1}{2q^{1/2}}\sim q\,.
\end{equation*}

However, Theorem \ref{thm:est-IR} does not apply as such here, since $H$ takes infinite values.
So we only give a formal estimate for the sake of comparison:
\begin{equation*}
 |u-u_R|(x,t)\leqslant \e^{-(1-\theta)R+o(1)R}.
\end{equation*}
We shall adress the details of this case in a forthcoming paper, which imply non trivial
adaptations of Section \ref{sect:theoretical}.


\section{Generalization to infinite activity jump diffusions}\label{sect:levy}

In this section, we briefly explain how to extend our results to a class of singular kernels.
We consider here functions $J$ satisfying:
\begin{equation*}
	\int_{\R^N}\Big(1\wedge |y|^2\Big)J(y)\dy<+\infty\,,
\end{equation*}
that is, we are interested in L\'evy measures with density $J$. Typical examples of such L\'evy measure are:
\begin{equation*}
	J(y)=\frac{1}{|y|^{N+\alpha}}\,,\ J(y)=\frac{\e^{-|y|}}{|y|^{N+\alpha}}\,,\
	J(y)=\frac{1}{|y|^{N+\alpha}}\ind_{\{|y|<1\}}(y)\,,
\end{equation*}
where $0<\alpha<2$. The first example is related to the well-known fractional Laplacian,
while the second example is called ``tempered $\alpha$-stable law'' among the probability community.
The third example is singular at the origin, but compactly supported.

However, the equation has to be understood in a special way:
since $J$ is not integrable near the origin, the equation should contain
an extra term (called ``corrector'') in order to give sense to the integral term:
\begin{equation*}
	\partial_tu = \int_{\R^N}\Big\{u(x+y)-u(x)-\ind_{\{|y|<1\}}(\nabla u\cdot y)\Big\}J(y)\,\dy\,.
\end{equation*}
Of course, if $u$ were $\mathrm{C}^2$-smooth, the integrand would be close to $D^2u$ for
$y$ small, and everything would be integrable. But since we do not know \textit{a priori} the
regularity of $u$, we have to replace it by some smooth test-function and this is where viscosity solutions
enter into play. We refer to \cite{BarlesChasseigneImbert07,BarlesImbert07} for precise
definitions and properties of visosity solutions in presence of L\'evy-type non-local terms.

With this tool, everthing works exactly as we did for the non-singular case, except that
the new Hamiltonian also involves a corrector term:
\begin{equation}\label{def:ham-sing}
	H(p)=\int_{\R^N}\big\{\e^{\,p\cdot y}-1 -(p\cdot y)\ind_{\{|y|<1\}}\big\}J(y)\dy\,,\quad p\in\R^N\,.
\end{equation}
Notice that integrability near the origin comes from the assumption on $J$ (since
$\e^py-1-py\sim (py)^2/2$ for $y\sim 0$), but we have to face the same integrability condition
at infinity: we shall assume that $H(p)<+\infty$ everywhere in $\R^N$.

So, the case of fractional Laplace operators is not covered here
since in this case, $H\equiv+\infty$; but the third example above can be dealt with. In view
of Section \ref{subsec:expo}, we hope also to cover the $\alpha$-stable law case soon.

For these adaptations, using the fact that the various manipulations that are used in
Section \ref{sect:theoretical} are valid for viscosity solutions, it can be
checked that the same limit Hamilton-Jacobi equation is obtained:
\begin{equation*}
 \partial_t I_\infty+H(\nabla I_\infty)=0\,.
\end{equation*}

Now, when one takes a look at the singular Hamiltonian $H$, it appears clearly that the behaviour
at infinity (i.e., for $|p|$ large) does not depend on the corrector term which is of lower order.
Another way of understanding this is to think in terms of the total amount of process that can
escape the ball $B_R$: small jumps are not responsible for that behaviour (in first approximation).
The main important contribution comes from big jumps, related to the tail of $J$.

To illustrate this heuristic remark, let us consider the case of a compactly supported
measure with a singularity:
we consider the $1$-D kernel
\begin{equation*}
	J(y)=\frac{1}{|y|^{1+\alpha}}\ind_{[-1,1]}(y)\,,\quad\text{where}\quad \alpha\in(0,2)\,.
\end{equation*}
We do the same computations with
\begin{equation*}
	D_pH(p)=\int_{-1}^{1}\frac{\e^{p\cdot y}}{|y|^\alpha}\dy\,,
\end{equation*}
which gives on the one hand	$D_p H(p)\leq c(\alpha)\e^{p}$, so that
\begin{equation*}
	\limsup_{p\to+\infty}\frac{q}{c(\alpha)\e^p}\leq1\,,
\end{equation*}
and thus the maximal behaviour of $L(q)$ is:
\begin{equation*}
	\limsup_{p\to\infty}\frac{L(q)}{|q|\ln|q|}\leq c(\alpha)\,.
\end{equation*}

On the other hand we use the fact that the Legendre-Fenchel is non-increasing, which is
obvious from the sup formula relating $L$ and $H$: since
\begin{equation*}
	H(p)=\int_{-1}^{1}\frac{\e^{p\cdot y}}{|y|^{1+\alpha}}\dy\geq \widehat{H}(p)=
	\int_{-1}^{1}\frac{\e^{p\cdot y}}{1+|y|^{1+\alpha}}\dy\,,
\end{equation*}
we know that the corresponding $L$ satisfies: $L(q)\geq \widehat{L}(q)$. But the modified kernel is a smooth
and compactly supported function for which the behaviour is known:
\begin{equation*}
	\widehat{L}(q)\sim c'(\alpha)|q|\ln |q|\,.
\end{equation*}
Finally we get that for some constants $c_1,c_2>0$,
\begin{equation*}
	c_1|q|\ln|q|\leq L(q)\leq c_2|q|\ln|q|\,.
\end{equation*}

The conclusion is that the presence of singularities at the origin
does not change the scale essentially. Similar examples can be derived for
non compactly supported kernels.
%
%
%
%

\section{Numerical experiments}\label{sect:numerical}

We gather now some numerical examples, with the aim of illustrating the convergence theorem~\ref{thm:est-IR}.

We try to approximate solutions of~\eqref{eq:0}, with initial datum $u_0=1$. The advantage of taking this datum is that the solution of the problem is $u\equiv 1$. We have an exact solution that we do not compute numerically.

In order to approximate $u_R$ we use a fixed mesh scheme that discretizes the interval $[-R,R]$, and an ODE integrator provide by {\sc Matlab}$^{\tiny \circledR}$ to integrate in time up to the fixed time $t_0=0.1$. For each time-step, the integral involved in the term $J*u_R$ is approximated by the classical trapezoidal rule.

%

\subsection{Compactly supported kernels}
We consider here a compactly supported nucleus
$$J(y)=-\frac{3}{32}(x-2)(x+2).$$
\begin{figure}[ht!]
  \begin{center}
  \includegraphics[width=10cm]{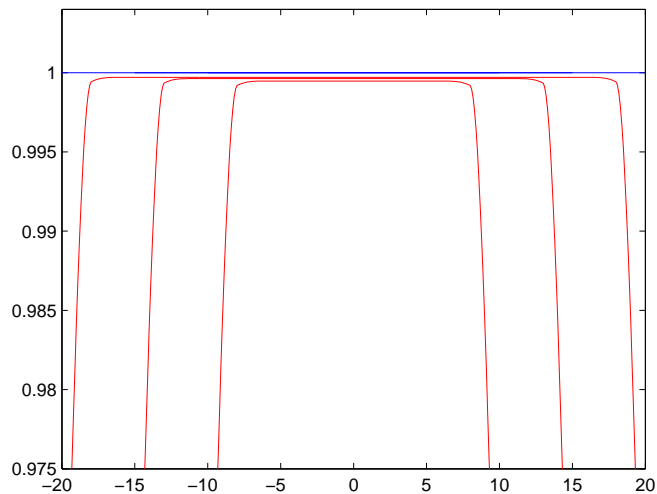}
  \caption{$J$ compactly supported}
  \label{fig:compact}
\end{center}
\end{figure}
 In Figure~\ref{fig:compact} we show convergence of $u_R$ to $u=1$. We observe that $u_R$  increases as $R$ increases (the values of $R$ are $R=10, 15, 20$).


\subsection{Gaussian example}

Consider the case $$J(y)=\frac{1}{\sqrt{2\pi}}\e^{-y^2/2}.$$
 Using the same ideas as in Section~\ref{sect:examples} we compute
\begin{eqnarray*}
 H(p)&=&\frac{1}{\sqrt{2\pi}}\int_\R \e^{\,py-y^2/2}\dy-1\\
&=&\frac{1}{\sqrt{2\pi}}\int_\R \e^{-(p-y)^2/2}\cdot \e^{\,p^2/2}\dy-1\\
&=&\e^{\,p^2/2}-1.
\end{eqnarray*}
Now, $D_pH(p)=p\e^{\,p^2/2}$ and $L(q)$ is obtained for $q=p\e^{\,p^2/2}$. For $q\to\infty$, we have $\ln q\sim\ln p+ p^2/2\sim p^2/2$, thus $p\sim 2(\ln q)^{1/2}$ so that
\begin{equation*}
L(q)\sim 2q(\ln q)^{1/2}\,.
\end{equation*}

In the picture on the left of Figure~\ref{fig:gauss} we show again convergence for different values of $R$ ($R=10,15,20$). The picture on the right represents for $x$ and $t$ fixed the convergence of this point to the exact solution.
\begin{center}
  \begin{figure}[ht!]
  \subfigure{%
  \includegraphics[width=8cm]{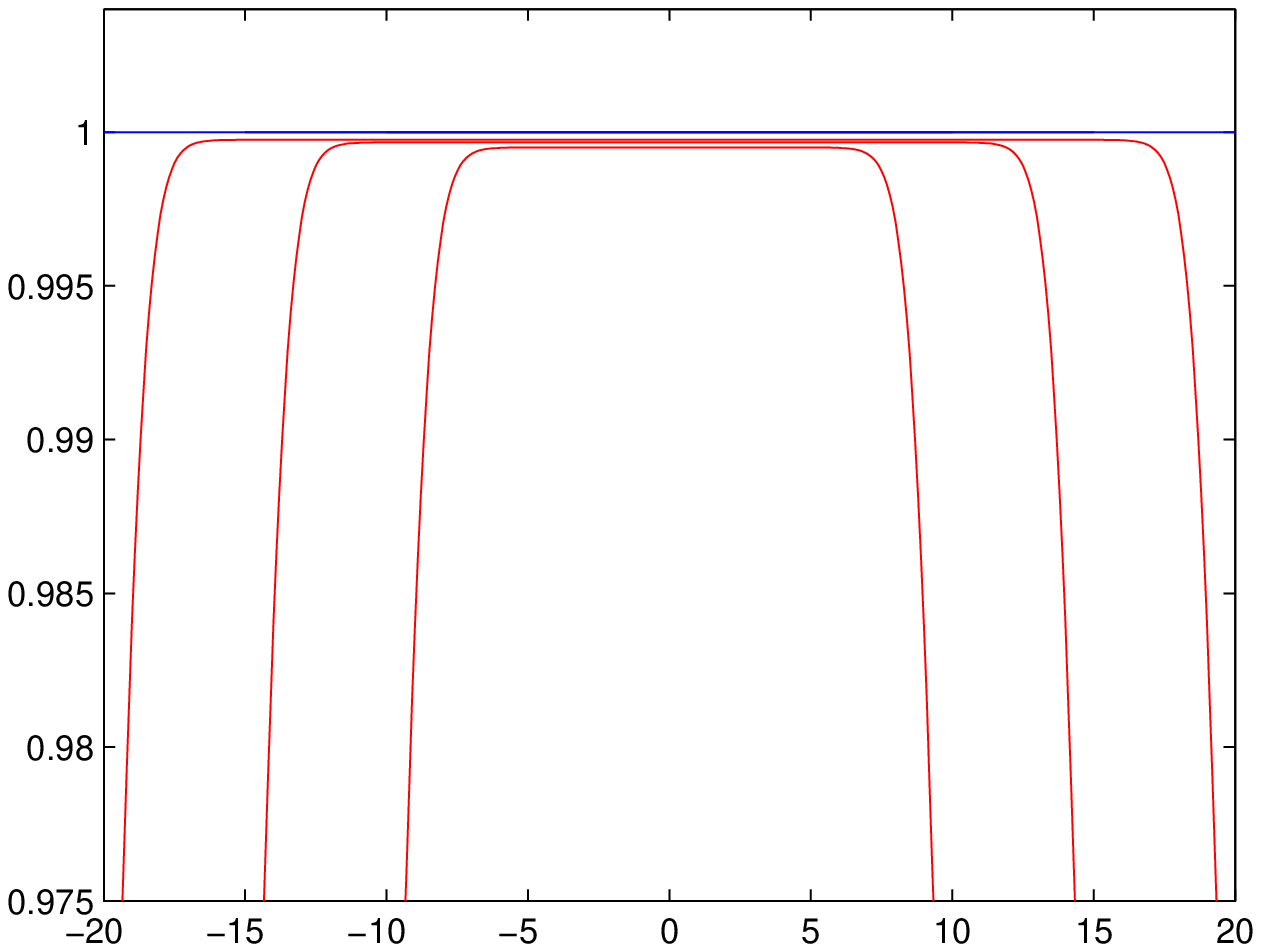}}
\subfigure{%
  \includegraphics[width=8cm]{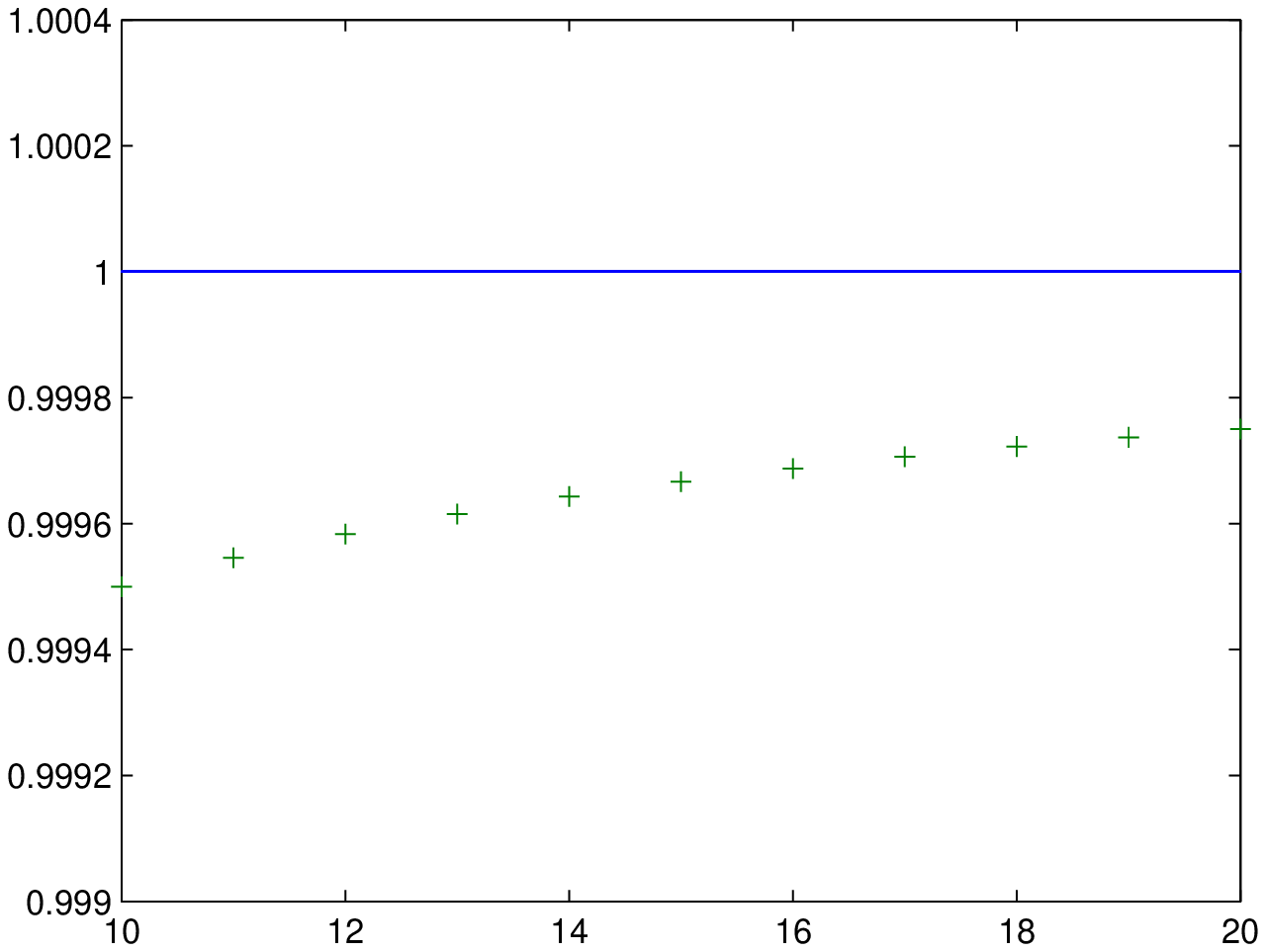}}
  \caption{$J$ gaussian, $R=10, 15, 20$}
\label{fig:gauss}
\end{figure}
\end{center}

\subsection{Critical exponential decay}

Finally we consider the case of a nucleus with critical exponential decay,
$$
J=\frac{1}{2}\e^{-|y|}.
$$
As we mentioned before, this case is not covered by the results of Section~\ref{sect:theoretical}, but it shall illustrate how convergence works for a general case:

\begin{center}
  \begin{figure}[ht!]
  \subfigure{%
  \includegraphics[width=8cm]{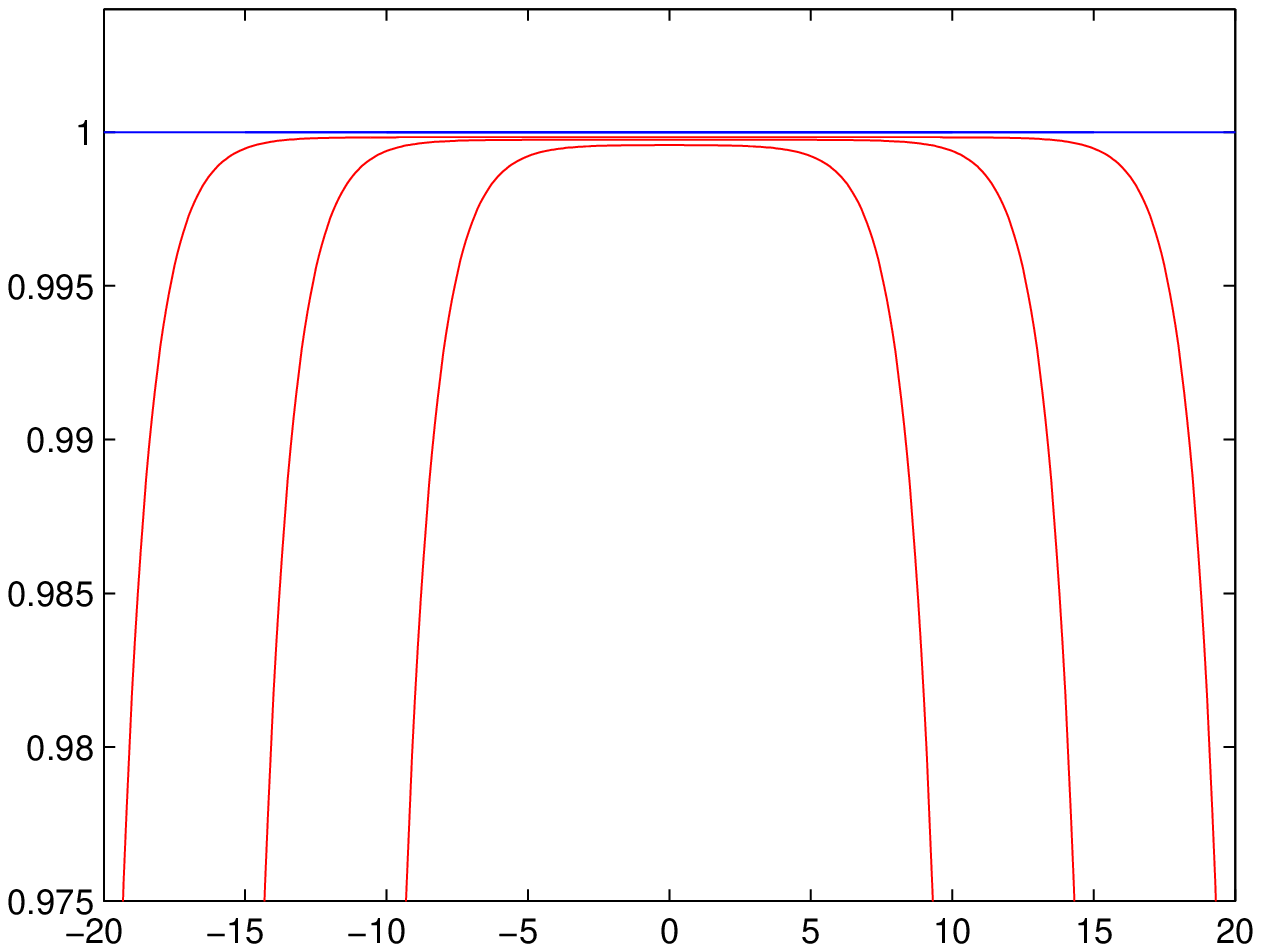}}
\subfigure{%
  \includegraphics[width=8cm]{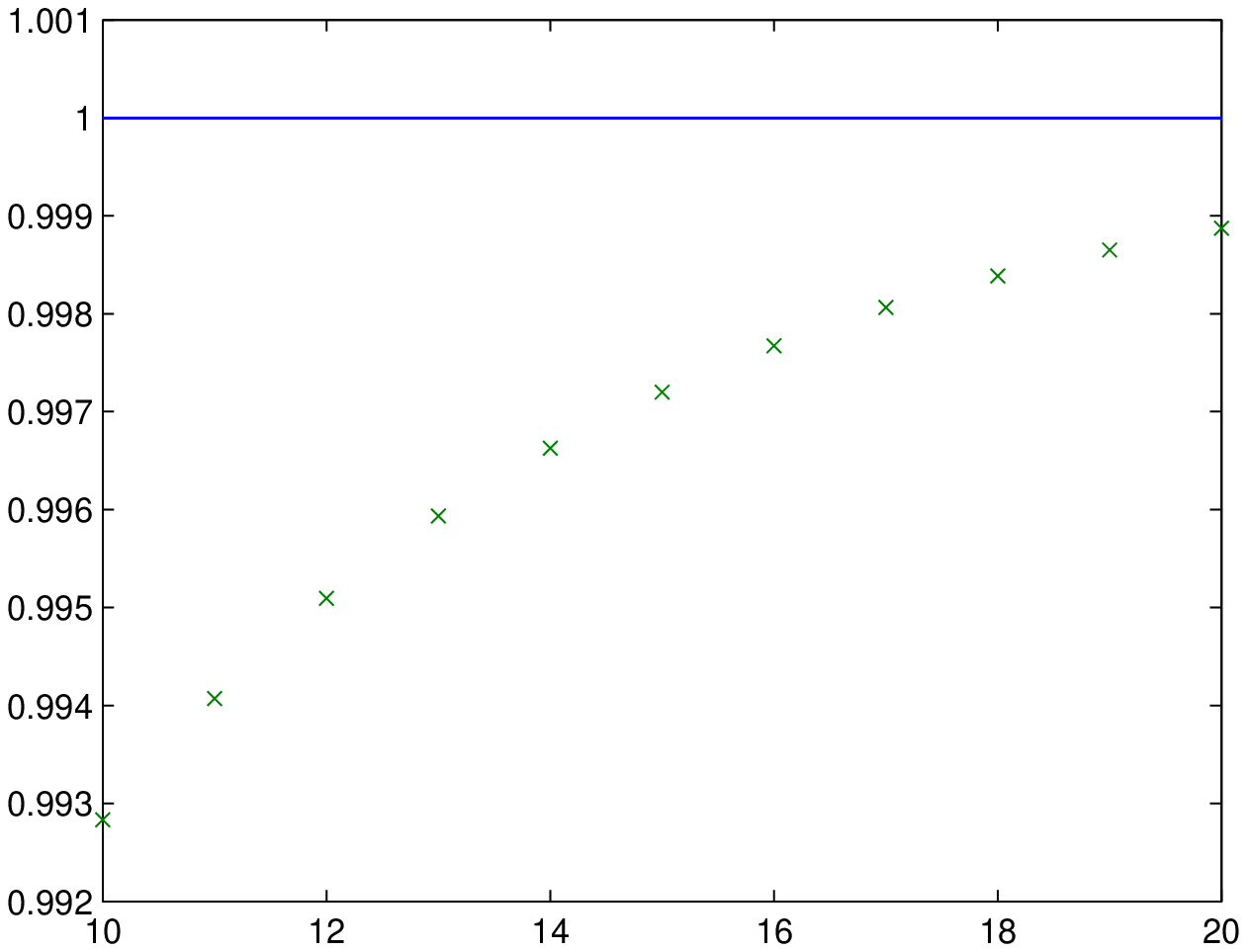}}
  \caption{$J$ critical exponential decay, $R=10, 15, 20$}
\end{figure}
\end{center}

The picture on the right shows convergence of a point that moves with $R$;~\i.e. we take a point of the form $(\theta R,t)$ with $t=0.1$ fixed and $\theta=0.8$.


\end{document}